\title{Intersections of compactly many \\ open sets are open}
\author{Mart\' \i n H\"otzel Escard\'o \\ School of Computer Science \\ University of Birmingham, UK}
\date{27th May 2009}
\newcommand{\theoremshape}{\scshape}
\theoremstyle{change}
\newtheorem{theorem}{Theorem.}[section]
\newtheorem{proposition}[theorem]{Proposition.}
\newtheorem{corollary}[theorem]{Corollary.}
\newtheorem{lemma}[theorem]{Lemma.}
\newtheorem{numbered}[theorem]{}
\newtheorem{definition}[theorem]{Definition.}
\newcommand{\old}[1]{}
\newcommand{\waybelow}{\ll}
\newcommand{\eqdef}{\stackrel{{\rm\scriptscriptstyle  def}}{=}}
\newcommand{\Sierp}{\mathbb{S}}
\newcommand{\Opens}{\operatorname{\mathcal{O}}}
\renewcommand{\O}{\Opens}
\newcommand{\comp}{\mathrel{\circ}}
\newcommand{\id}{\operatorname{id}}
\newcommand{\up}{\operatorname{\uparrow}}
\newcommand{\upup}{\mathord{\hbox{\makebox[0pt][l]{\raise .6mm\hbox{$\uparrow$}}$\uparrow$}}}
\newcommand{\Omit}[1]{}
\newcommand{\chron}[1]{}
\begin{document}

\maketitle

\section{Introduction}

By definition, the intersection of finitely many open sets of any
topological space is open.  Nachbin~\cite{nachbin:compactinter}
observed that, more generally, the intersection of compactly many
open sets is open (see Section~\ref{new} for a precise formulation of
this fact). Of course, this is to be expected, because compact sets
are intuitively understood as those sets that, in some mysterious
sense, behave as finite sets. Moreover, Nachbin applied this to obtain
elegant proofs of various facts concerning compact sets in topology
and elsewhere.

\medskip
A simple calculation (performed in Section~\ref{new}) shows that
Nachbin's observation amounts to the well known fact that if a space
$X$ is compact, then the projection map $Z \times X \to Z$ is closed
for every space~$Z$.

\medskip
It is also well known that the converse holds: if a space $X$ has the
property that the projection $Z \times X \to Z$ is closed for every
space~$Z$, then $X$ is compact. We reformulate this as a converse of
Nachbin's observation, and apply this to obtain further elegant proofs
of (old and new) theorems concerning compact sets.

\medskip We also provide a new proof of (a reformulation of) the fact
that a space $X$ is compact if and only if the projection map
$Z \times X \to Z$ is closed for every space~$Z$. This is generalized
in various ways, to obtain new results about spaces of continuous
functions, proper maps, relative compactness, and compactly generated
spaces.

In particular, we give an
intrinsic description of the binary product in the category of compactly generated
spaces in terms of the Scott topology of the lattice of open sets.

\pagebreak
\section{Some characterizations of the notion of compactness} \label{new}

\begin{definition}
Let $Z$ be a topological space and $\{ V_i \mid i \in I\}$ be a family
of open sets of~$Z$. If the index set $I$ comes endowed with a
topology, such a family will be called \emph{continuously indexed}
if whenever $z \in V_i$, there are neighbourhoods $T$ of $z$ and $U$ of $i$ such that $t \in V_u$ for all $t \in T$ and $u \in U$.
This amounts to saying that the graph
$\{ (z,i) \in Z \times I \mid z \in V_i \}$ of the family is open in
the product topology.
\end{definition}
\begin{theorem} \label{compactness}
  A space $X$ is compact if and only if the set $\bigcap_{x \in X}
  V_x$ is open for any continuously indexed family $\{ V_x
  \mid x \in X \}$ of open sets of any space~$Z$.
\end{theorem}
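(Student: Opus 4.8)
The plan is to exploit the reformulation already hinted at in the Definition: a continuously indexed family $\{V_x \mid x \in X\}$ of open sets of $Z$ is nothing but an open set $G \subseteq Z \times X$, namely its graph, and a one-line computation shows that $z \in \bigcap_{x \in X} V_x$ exactly when the whole fibre $\{z\} \times X$ lies inside $G$. Complementing, $Z \setminus \bigcap_{x} V_x$ is precisely the image under the projection $\pi \colon Z \times X \to Z$ of the closed set $(Z \times X) \setminus G$. As $G$ ranges over all open subsets, its complement ranges over all closed subsets, so the condition ``$\bigcap_x V_x$ is open for every continuously indexed family over every $Z$'' is literally the statement that $\pi \colon Z \times X \to Z$ is a closed map for every $Z$, and the theorem becomes the Kuratowski--Mr\'owka characterization of compactness. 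I will nevertheless argue both implications directly, in the language of the families, so that the proof stays self-contained.

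For the forward implication, suppose $X$ is compact and fix $z \in \bigcap_{x} V_x$. For each $x \in X$ I apply continuous indexing at the pair $(z,x)$ to obtain open neighbourhoods $T_x \ni z$ in $Z$ and $U_x \ni x$ in $X$ with $t \in V_u$ whenever $t \in T_x$ and $u \in U_x$. The sets $U_x$ cover $X$, so compactness yields a finite subcover $U_{x_1}, \dots, U_{x_n}$; then $T = T_{x_1} \cap \dots \cap T_{x_n}$ is a neighbourhood of $z$ with $T \subseteq V_x$ for every $x$ (given $x$, pick $k$ with $x \in U_{x_k}$ and use $T \subseteq T_{x_k}$). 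Hence $T \subseteq \bigcap_x V_x$, so $z$ is interior and the intersection is open. This is the expected tube-lemma argument and presents no difficulty.

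The substantial half is the converse, for which I argue by contraposition: assuming $X$ is not compact, I must manufacture a single space $Z$ and a continuously indexed family whose intersection fails to be open. Fix an open cover $\{U_i \mid i \in I\}$ of $X$ with no finite subcover (so $I$ is necessarily infinite). Let $Z$ consist of all finite subsets of $I$ together with one extra point $\infty$; make each finite subset an isolated point, and let the open neighbourhoods of $\infty$ be the sets containing some tail $\{F : F \supseteq F_0\}$, so that $\infty$ is the limit of the net of finite subsets directed by inclusion. For $x \in X$ put $V_x = \{\infty\} \cup \{F : x \in \bigcup_{i \in F} U_i\}$. The crux --- and the step I expect to be most delicate --- is checking that this family is continuously indexed, i.e.\ that its graph is open: the isolated points $(F,x)$ are trivial, and the only real work is at points $(\infty, x)$, where one selects $i_0$ with $x \in U_{i_0}$ and verifies that the tail above $\{i_0\}$, crossed with $U_{i_0}$, sits inside the graph. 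Granting this, $z \in \bigcap_{x} V_x$ forces $z = \infty$ or $z$ a finite subcover; since there is no finite subcover, the intersection is exactly the singleton $\{\infty\}$, which is not open because every neighbourhood of $\infty$ meets infinitely many finite sets. This contradicts the hypothesis and forces $X$ to be compact.
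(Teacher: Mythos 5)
Your proof is correct, and its first half follows the paper's route: your graph-plus-De-Morgan reduction is exactly the paper's Lemma~\ref{index:lemma}, and your forward implication is the same tube-lemma argument the paper gives in Lemma~\ref{compact:indexed} (specialized to the constant family $Q_y = X$). Where you genuinely diverge is the converse. The paper (Lemma~\ref{projection:nontrivial}) argues positively: given a directed open cover $\mathcal{C}$ of $X$, it builds a space $Z$ whose points are \emph{all} open sets of $X$, topologized by a ``Scott-like'' topology relativized to $\mathcal{C}$ (upward-closed collections $V$ such that $\bigcup \mathcal{C} \in V$ forces some member of $\mathcal{C}$ into $V$), takes $W$ to be the membership relation $\{(U,x) \mid x \in U\}$, and reads off a member of $\mathcal{C}$ covering $X$ from openness of $\{U \mid X \subseteq U\}$. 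You instead argue by contraposition with a Kuratowski--Mr\'owka-style net space: $Z$ is the set of finite subsets of the index set of a cover with no finite subcover, discrete except for an added limit point $\infty$ whose neighbourhoods contain tails, and the family $V_x = \{\infty\} \cup \{F \mid x \in \bigcup_{i \in F} U_i\}$ has intersection exactly $\{\infty\}$, which is not open; your verification of continuous indexing at the points $(\infty, x)$ is the delicate step and it does go through as you sketch it. Both constructions encode the directedness of the cover in an auxiliary space with one interesting limit point, so they are cousins, but the trade-offs differ: your space is more concrete and the checking is pure combinatorics of finite subsets, giving an arguably more elementary and self-contained converse; the paper's lattice-of-opens construction avoids contraposition, works uniformly for any subset $Q \subseteq X$ (yielding Lemma~\ref{compact:set} at no extra cost), and is reused verbatim in the proof of Theorem~\ref{relative:universal} for the relative-compactness generalization, where your cover-specific witness would need to be adapted.
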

That is, not only are the open sets of any space closed under the
formation of compact intersections, in addition to the postulated
finite intersections, but also this characterizes the notion of
compactness.  In the ``synthetic'' formulation of compactness
developed in~\cite[Chapter 7]{escardo:barbados}, we used continuous
universal quantification functionals, for which function spaces were
required (see Section~\ref{original} below). A related formulation
that avoids the function-space machinery is the following:
\begin{theorem} \label{compactness:universal}
  A space $X$ is compact if and only if for any space $Z$ and any open
  set $W \subseteq Z \times X$, the set $ \{ z \in Z \mid \forall x
  \in X. (z,x) \in W \}$
  is open.
\end{theorem}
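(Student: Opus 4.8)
The plan is to derive Theorem~\ref{compactness:universal} directly from Theorem~\ref{compactness}, since the two statements are manifestly close: each speaks of compactness of $X$ via a universally quantified condition over $X$ involving an open set in a product with $Z$. The key observation is that a continuously indexed family $\{V_x \mid x \in X\}$ of open sets of $Z$ is, by the definition just given, \emph{the same data} as an open set $W \subseteq Z \times X$, under the correspondence $W = \{(z,x) \mid z \in V_x\}$, equivalently $V_x = \{z \in Z \mid (z,x) \in W\}$. Granting this dictionary, the set $\bigcap_{x \in X} V_x$ of Theorem~\ref{compactness} and the set $\{z \in Z \mid \forall x \in X.\,(z,x) \in W\}$ of Theorem~\ref{compactness:universal} are literally the same subset of $Z$, so the two openness conditions coincide and both theorems assert the equivalence of the identical condition with compactness of $X$.

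First I would make the dictionary precise. Given a continuously indexed family, I set $W = \{(z,x) \in Z \times X \mid z \in V_x\}$; the definition states that continuous indexing is \emph{exactly} the assertion that this graph is open in $Z \times X$, so $W$ is open. Conversely, given an open $W \subseteq Z \times X$, I define $V_x = \{z \mid (z,x) \in W\}$, which is the preimage of $W$ under the continuous map $z \mapsto (z,x)$ and hence open, and openness of $W$ says precisely that $\{V_x\}$ is continuously indexed. These two passages are mutually inverse, so the correspondence is a bijection between continuously indexed families and open subsets of $Z \times X$.

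With the dictionary in hand I would verify the identity of the two subsets of $Z$. For a fixed $z \in Z$, membership $z \in \bigcap_{x \in X} V_x$ means $z \in V_x$ for all $x$, which under the dictionary is $\forall x \in X.\,(z,x) \in W$, i.e.\ membership in $\{z \mid \forall x.\,(z,x)\in W\}$. Hence the two sets agree as subsets of $Z$, and in particular one is open if and only if the other is.

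Finally I would assemble the equivalence. By Theorem~\ref{compactness}, $X$ is compact iff $\bigcap_{x} V_x$ is open for every continuously indexed family over every $Z$; translating through the bijection, this is iff $\{z \mid \forall x.\,(z,x)\in W\}$ is open for every open $W \subseteq Z \times X$ over every $Z$, which is exactly the condition of Theorem~\ref{compactness:universal}. I do not expect a genuine obstacle here, since the result is essentially a notational reformulation; the only point demanding care is checking that the continuous-indexing condition and the openness of $W$ really are equivalent in both directions, but this is already asserted in the definition, so the argument reduces to unwinding the quantifiers.
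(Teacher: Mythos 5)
Your proposal is correct, and its technical core---the bijection between open sets $W \subseteq Z \times X$ and continuously $X$-indexed families of open sets of $Z$, together with the identity between $\bigcap_{x \in X} V_x$ and $\{ z \in Z \mid \forall x \in X.\, (z,x) \in W \}$---is precisely the observation the paper itself makes, recorded there as the equivalence of items (1) and (2) of Lemma~\ref{index:lemma}. The difference is architectural: the paper does not deduce Theorem~\ref{compactness:universal} from Theorem~\ref{compactness}, but proves both simultaneously, by further relating these two equivalent conditions to a third one (closedness of the projection $Z \times X \to Z$, obtained by a De Morgan computation on the complement of $W$) and then invoking the well-known equivalence of compactness with closedness of all such projections; Section~\ref{self} later supplies a self-contained substitute for that classical input. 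Your route---citing Theorem~\ref{compactness} as an established prior result---is logically sound, since that theorem is stated first and its proof in the paper does not pass through Theorem~\ref{compactness:universal}, so no circularity arises. But note what this costs: your argument is purely a translation and contains no compactness-specific topology of its own; all of that content is hidden in the appeal to Theorem~\ref{compactness}, and unfolding the paper's proof of that theorem reproduces exactly the paper's joint argument. So the two proofs share the same key idea, with the dependency between the two theorems rearranged rather than a genuinely different method being used.
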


This set can be written, more geometrically, as $\{ z \in Z \mid \{z\}
\times X \subseteq W\}$, but the given logical formulation emphasizes
the connection with~\cite{escardo:barbados}.  To
prove~\ref{compactness} and~\ref{compactness:universal}, first observe
that any open set $W \subseteq Z \times X$ gives rise to the
continuously indexed family $\{ V_x \mid x \in X \}$ of open sets of
$Z$ defined by $z \in V_x$ iff $(z,x) \in W$, and that this
construction is a bijection from open sets of $Z \times X$ to
continuously $X$-indexed families of open sets of $Z$.  Moreover, $z
\in \bigcap_{x \in X} V_x$ iff $\forall x \in X.(z,x) \in W$.  Next,
consider the closed set $F \eqdef (Z \times X) \setminus W$ and the
projection $\pi \colon Z \times X \to Z$.  Then $z \in \pi(F)$ iff
$\exists x \in X.  (z,x) \in F$.  By the De Morgan law for existential
and universal quantifiers, $Z \setminus \pi(F) = \{ z \in Z \mid
\forall x \in X.  (z,x) \in W \}$.  It follows that:
\begin{lemma} \label{index:lemma}
  The following are equivalent for spaces $X$ and $Z$.
  \begin{enumerate}
  \item The open sets of $Z$ are closed under continuously $X$-indexed
    intersections.
  \item For any open set $W \subseteq Z \times
    X$, the set $ \{ z \in Z \mid \forall x \in X. (z,x) \in W \}$ is
    open.
  \item The projection $Z \times X \to Z$ is a closed map.
  \end{enumerate}
\end{lemma}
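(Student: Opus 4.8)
The plan is to establish the two equivalences $(1) \Leftrightarrow (2)$ and $(2) \Leftrightarrow (3)$, each by directly invoking the translations already set up in the paragraph preceding the statement. No genuinely new idea is needed: the entire content is in checking that the stated bijection and the De Morgan computation do all the heavy lifting, so the proof amounts to assembling these observations correctly.

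For $(1) \Leftrightarrow (2)$, I would use the bijection between open sets $W \subseteq Z \times X$ and continuously $X$-indexed families $\{V_x \mid x \in X\}$ of open sets of $Z$, under which $z \in \bigcap_{x \in X} V_x$ iff $\forall x \in X.\,(z,x) \in W$. Assuming $(1)$, given any open $W$ I form the corresponding family; its intersection is open by $(1)$ and equals the set in $(2)$, so $(2)$ holds. Conversely, assuming $(2)$, any continuously indexed family is, by the bijection, induced by some open $W$, so its intersection coincides with the set in $(2)$ and is therefore open, giving $(1)$.

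For $(2) \Leftrightarrow (3)$, I would pass to complements. Writing $F \eqdef (Z \times X) \setminus W$, the projection $\pi$ is closed precisely when $\pi(F)$ is closed for every closed $F$, equivalently when $Z \setminus \pi(F)$ is open for every closed $F$. Since $W \mapsto F$ is a bijection between the open and the closed subsets of $Z \times X$, and since the computation above yields $Z \setminus \pi(F) = \{z \in Z \mid \forall x \in X.\,(z,x) \in W\}$, this condition says exactly that the set in $(2)$ is open for every open $W$. Hence $(2) \Leftrightarrow (3)$.

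The only point requiring care — and the closest thing to an obstacle — is the claim that the assignment $W \mapsto \{V_x\}$ is a genuine bijection \emph{onto} the continuously indexed families, i.e.\ that every continuously indexed family arises from an open $W$. But this is immediate from the definition: continuous indexing is precisely the assertion that the graph $\{(z,x) \mid z \in V_x\}$ is open in $Z \times X$, and that graph is the witnessing set $W$. With surjectivity of the correspondence in hand, the lemma reduces to the bookkeeping sketched above.
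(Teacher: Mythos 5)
Your proposal is correct and follows essentially the same route as the paper: the equivalence $(1) \Leftrightarrow (2)$ via the graph bijection between open sets $W \subseteq Z \times X$ and continuously $X$-indexed families, and $(2) \Leftrightarrow (3)$ via complementation and the De Morgan identity $Z \setminus \pi(F) = \{ z \in Z \mid \forall x \in X.\,(z,x) \in W\}$, are exactly the two observations the paper assembles before stating the lemma. Your explicit check that every continuously indexed family arises as the family of slices of its (open) graph is precisely the surjectivity the paper leaves implicit in calling the construction a bijection.
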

This concludes the proof of~\ref{compactness}
and~\ref{compactness:universal}, because it is well known that
compactness of~$X$ is equivalent to closedness of the projection $Z
\times X \to Z$ for every~$Z$. A self-contained proof of a
generalization of~\ref{compactness:universal}, which doesn't rely on
previous knowledge of the closed-projection characterization of
compactness, is given in Section~\ref{self}.

\pagebreak[4]

\section{A characterization via function spaces} \label{original}

We apply the formulation of compactness given
by~\ref{compactness:universal} to derive the ``synthetic'' formulation
based on function spaces~\cite[Chapter 7]{escardo:barbados}. No
previous knowledge on function-space topologies is required here.

\begin{definition} \label{expo:univ}
  For given spaces $S$ and $X$, we denote by $S^X$ the set of
  continuous maps $X \to S$ endowed with a topology such
  that\footnote{Because the category of continuous maps of topological
    spaces is well pointed, this coincides with the categorical notion
    of exponential.}
\begin{enumerate}
\item the evaluation
map $e \colon S^X \times X \to S$ defined by $e(f,x)=f(x)$ is
continuous,
\item for any space $Z$, if $f \colon Z \times X \to S$ is continuous
  then so is its exponential transpose $\bar{f} \colon Z \to S^X$
  defined by $\bar{f}(z) = (x \mapsto f(z,x))$.
\end{enumerate}
\end{definition}
Such an exponential topology doesn't always exist, but when it does, it
is easily seen to be unique.  Criteria for existence and explicit
constructions can be found in
e.g.~\cite{escardo:heckmann:functionspace} or~\cite[Chapter
8]{escardo:barbados}, or in the extensive set of references contained
therein, but they are not necessary for our purposes.

\begin{definition}
  Let $\Sierp$ be the Sierpinski space with an isolated point $\top$
  (true) and a limit point~$\bot$ (false). That is, the open sets are
  $\emptyset$, $\{\top\}$ and $\{\bot,\top\}$, but not $\{ \bot\}$.
\end{definition}
Then a map $p \colon X \to \Sierp$ is continuous iff $p^{-1}(\top)$ is
open, and a set $U \subseteq X$ is open iff its characteristic map
$\chi_U \colon X \to \Sierp$ is continuous.  Previous proofs of the
following theorem were based on the fact that if the exponential
$\Sierp^X$ exists, then its topology is the Scott topology. The
present proof doesn't require this knowlegde, relying only
on~\ref{compactness:universal} and the universal property of
exponentials given by Definition~\ref{expo:univ}.

\pagebreak[3]
\begin{theorem}
  If the exponential $\Sierp^X$ exists, then the following are equivalent:
  \begin{enumerate}
  \item $X$ is compact.
  \item The universal-quantification functional $A \colon \Sierp^X \to
    \Sierp$ defined by
   \[ A(p) = \top \iff \forall x \in X. p(x) = \top \]
  is continuous.
  \end{enumerate}
\end{theorem}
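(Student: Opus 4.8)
The plan is to reduce both implications to Theorem~\ref{compactness:universal}, using the characteristic-map dictionary between open sets and continuous maps into $\Sierp$ together with the universal property of the exponential from Definition~\ref{expo:univ}. The one identity I would keep in front of me throughout is that $A^{-1}(\top) = \{ p \in \Sierp^X \mid \forall x \in X.\, p(x) = \top \}$, so that continuity of $A$ is by definition exactly the assertion that this subset of $\Sierp^X$ is open.

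For the implication $(1) \himplies (2)$, I would instantiate Theorem~\ref{compactness:universal} at the test space $Z = \Sierp^X$. The evaluation map $e \colon \Sierp^X \times X \to \Sierp$ is continuous by Definition~\ref{expo:univ}, so $W \eqdef e^{-1}(\top)$ is an open subset of $\Sierp^X \times X$, and by construction $(p,x) \in W$ iff $p(x) = \top$. Since $X$ is compact, Theorem~\ref{compactness:universal} tells us that $\{ p \in \Sierp^X \mid \forall x \in X.\, (p,x) \in W \}$ is open; but this set is literally $A^{-1}(\top)$, whence $A$ is continuous. The only ingenuity in this direction is choosing $\Sierp^X$ itself as the test space.

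For the converse $(2) \himplies (1)$, I would verify the criterion of Theorem~\ref{compactness:universal} directly. Given any space $Z$ and any open $W \subseteq Z \times X$, its characteristic map $\chi_W \colon Z \times X \to \Sierp$ is continuous, so by the universal property its transpose $\bar{\chi_W} \colon Z \to \Sierp^X$ is continuous, with $\bar{\chi_W}(z)(x) = \chi_W(z,x)$. Composing with the assumed-continuous $A$ yields a continuous map $A \comp \bar{\chi_W} \colon Z \to \Sierp$, and unwinding the definitions shows $(A \comp \bar{\chi_W})(z) = \top$ iff $\forall x \in X.\, (z,x) \in W$. Hence $\{ z \in Z \mid \forall x \in X.\, (z,x) \in W \} = (A \comp \bar{\chi_W})^{-1}(\top)$ is open, and Theorem~\ref{compactness:universal} delivers compactness of $X$.

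I expect no serious obstacle here: the argument is a clean transposition back and forth across the exponential adjunction. The step most worth stating with care is the bookkeeping identity $A(\bar{\chi_W}(z)) = \top \iff \forall x \in X.\,(z,x) \in W$, where the definition of $A$, the definition of the transpose, and the characteristic-map correspondence all have to line up; and, in the forward direction, the mild cleverness of taking the test space to be $\Sierp^X$ with $W$ the preimage of $\top$ under evaluation.
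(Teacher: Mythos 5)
Your proof is correct and is essentially identical to the paper's own: both directions instantiate Theorem~\ref{compactness:universal}, the forward one at $Z = \Sierp^X$ with $W = e^{-1}(\top)$, and the converse by transposing $\chi_W$ across the exponential and composing with $A$. The bookkeeping identities you flag are exactly the ones the paper verifies.
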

\begin{proof}
  $(\Downarrow)$: Because the evaluation map $e \colon \Sierp^X \times
  X \to \Sierp$ is continuous, the set $W \eqdef e^{-1}(\top)$ is
  open, and hence $\{ p \in \Sierp^X \mid \forall x \in X.  (p,x)
  \in W \} = 
  A^{-1}(\top)$ is open by compactness of $X$, and therefore $A$ is
  continuous.

  \medskip

  $(\Uparrow)$: Let $Z$ be any space and $W \subseteq Z \times X$ be
  an open set. Because the transpose $w \colon Z \to \Sierp^X$ of $\chi_W
  \colon Z \times X \to \Sierp$ is continuous, so is $A \comp w
  \colon Z \to \Sierp$, and hence $V \eqdef (A \comp
  w)^{-1}(\top)$ is open.  But $z \in V$ iff $A(w(z))=\top$ iff
  $\forall x \in X.  w(z)(x)=\top$ iff $\forall x \in X. (z,x) \in W$.
  This shows that $\{ z \in Z \mid \forall x \in X. (z,x) \in W \}$ is
  open, and hence that $X$ is compact.
\end{proof}

\pagebreak[3]

\section{Generalization of Section~\ref{new}} \label{self}

A proof that compactness of $X$ implies closedness of the projection
$Z \times X \to Z$ for every space~$Z$, which amounts to the
implication~\ref{compactness:universal}($\Rightarrow$), is relatively
easy.  We now formulate and prove a generalization of this implication
for families of compact subsets of the space~$X$.

\begin{definition} \label{compact:indexed:def}
  We say that a family $\{ Q_y \mid y \in Y\}$ of compact subsets
  of~$X$ is continuously indexed\footnote{This is equivalent to
    continuity of the map $y \mapsto Q_y$ when the collection of
    compact sets is endowed with the upper Vietoris topology. For a
    family $V_x$ of open sets of $Z$, however, there isn't a topology
    on the collection of open sets of $Z$ such that continuity of the
    family is equivalent to continuity of the map $x \mapsto V_x$,
    unless $Z$ is an exponentiable space --- see e.g.\ \cite[Corollary
    4.6]{escardo:heckmann:functionspace}.}  by a topological space $Y$
  if for every neighbourhood $U$ of $Q_y$, there is a
  neighbourhood~$T$ of $y$ such that $Q_t \subseteq U$ for all $t \in
  T$. This amounts to saying that the set $\{ y \in Y \mid Q_y
  \subseteq U \}$ is open for every open set~$U\ \subseteq X$.
\end{definition}
The implication \ref{compactness:universal}($\Rightarrow$) is a
special case of the following, considering the space $Y$ with just one
point $y$ and the trivial family $Q_y = X$.
\begin{lemma} \label{compact:indexed}
  Let $\{ Q_y \mid y \in Y\}$ be a continuously indexed family of
  compact sets of a space $X$, let $Z$ be any space, and $W \subseteq
  Z \times X$ be an open set. Then the set
  \[ \{ (z,y) \in Z \times Y \mid \forall_{x \in Q_y}.(z,x) \in W \}  \]
is open.
\end{lemma}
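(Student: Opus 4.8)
The plan is to fix a point $(z_0, y_0)$ in the set $S \eqdef \{ (z,y) \in Z \times Y \mid \forall_{x \in Q_{y_0}}.(z,x) \in W \}$ and produce a basic open neighbourhood of it contained in $S$. So suppose $(z_0, x) \in W$ for every $x \in Q_{y_0}$. The key idea is to separate the two roles of the data: use compactness of the single fibre $Q_{y_0}$ to extract, from the open cover of $\{z_0\} \times Q_{y_0}$ coming from $W$, a finite subcover that gives a single open box $T_0 \times U$ around $\{z_0\} \times Q_{y_0}$ sitting inside $W$; then use the continuous indexing to pass from $y_0$ to nearby $y$.

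Concretely, I would first run the standard tube-lemma argument in the fibre. For each $x \in Q_{y_0}$, since $(z_0,x) \in W$ and $W$ is open in $Z \times X$, choose basic opens $T_x \ni z_0$ in $Z$ and $U_x \ni x$ in $X$ with $T_x \times U_x \subseteq W$. The sets $U_x$ cover the compact set $Q_{y_0}$, so finitely many $U_{x_1}, \dots, U_{x_n}$ already cover it. Set $U \eqdef U_{x_1} \cup \cdots \cup U_{x_n}$ and $T_0 \eqdef T_{x_1} \cap \cdots \cap T_{x_n}$. Then $U$ is an open neighbourhood of $Q_{y_0}$, $T_0$ is an open neighbourhood of $z_0$, and crucially $T_0 \times U \subseteq W$: any $(z,x)$ with $z \in T_0$ and $x \in U$ has $x \in U_{x_j}$ for some $j$, whence $(z,x) \in T_{x_j} \times U_{x_j} \subseteq W$ since $z \in T_0 \subseteq T_{x_j}$.

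Now I would bring in the continuous indexing. Since $U$ is a neighbourhood of $Q_{y_0}$, Definition~\ref{compact:indexed:def} furnishes a neighbourhood $T_1$ of $y_0$ in $Y$ with $Q_y \subseteq U$ for all $y \in T_1$. The claim is that $T_0 \times T_1$ is the desired neighbourhood of $(z_0,y_0)$ inside $S$. Indeed, take $(z,y) \in T_0 \times T_1$ and any $x \in Q_y$; then $x \in U$ because $y \in T_1$, and $z \in T_0$, so $(z,x) \in T_0 \times U \subseteq W$. Hence $(z,y) \in S$, proving $S$ is open.

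I expect the main conceptual obstacle to be keeping the two neighbourhood systems cleanly separated and verifying that the \emph{single} pair $(T_0, U)$ extracted from compactness is simultaneously usable for every nearby fibre $Q_y$ — that is, that the finite subcover depends only on $y_0$ and the fixed $W$, not on the varying $y$. Once one sees that $U$ can be chosen as a neighbourhood of the whole of $Q_{y_0}$ before invoking the indexing condition, the argument is just the tube lemma followed by a single application of Definition~\ref{compact:indexed:def}; no further delicacy is needed, and the reduction noted before the statement (one point $y$, $Q_y = X$) recovers exactly the closed-projection half of~\ref{compactness:universal}.
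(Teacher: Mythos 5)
Your proof is correct and is essentially the paper's own argument: the tube-lemma step (finite subcover of $Q_{y_0}$, intersecting the $Z$-side opens $T_{x_j}$ and taking the union $U$ of the $X$-side opens) followed by a single application of Definition~\ref{compact:indexed:def} to the neighbourhood $U$ of $Q_{y_0}$. The only blemish is a typo in your displayed definition of $S$, where $Q_{y_0}$ should read $Q_y$; your argument itself treats the varying fibre correctly.
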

Equivalently,
\[
V_y \eqdef \{ z \in Z \mid \{ z\} \times Q_y \subseteq W\}
\]
is a continuously indexed family of open sets of $Z$.
\begin{proof}
  To show that the set $M \eqdef \{ (z,y) \in Z \times Y \mid \{ z \}
  \times Q_y \subseteq W \}$ is open, we construct, for any pair
  $(z,y) \in M$, open sets $V$ and $T$ with $(z,y) \in V \times T
  \subseteq M$. So assume that $\{ z \} \times Q_y \subseteq W$.  For
  any $x \in Q_y$, we have that $(z,x) \in W$ and hence there are open
  sets $U_x$ and $V_x$ with $(z,x) \in V_x \times U_x \subseteq W$ by
  definition of the product topology.  Then $Q_y \subseteq \bigcup \{
  U_x \mid x \in Q_y\}$, and, by compactness of~$Q_y$, there is a
  finite set $I \subseteq Q_y$ such that already $Q_y \subseteq
  \bigcup \{ U_i \mid i \in I\}$. Let $V \eqdef \bigcap_{i \in I}
  V_i$. Then $V$ is an open neighbourhood of~$z$.  By hypothesis,
  there is an open neighbourhood $T$ of $y$ such that $Q_t \subseteq
  \bigcup \{ U_i \mid i \in I\}$ for all $t \in T$. To show that $V
  \times T \subseteq M$, let $(v,t) \in V \times T$. For any $x \in
  Q_t$, there is $i \in I$ such that $x \in U_i$, and hence $(v,x) \in
  V \times U_i \subseteq V_i \times U_i \subseteq W$, which shows that
  $\{ v \} \times Q_t \subseteq \bigcup_{i \in I} V \times U_i
  \subseteq W$, and therefore that $(v,t) \in M$, as required.
\end{proof}

\medskip

That closedness of the projection $Z \times X \to Z$ for every space
$Z$ implies compactness of $X$, which amounts to the
implication~\ref{compactness:universal}($\Leftarrow$), is less
trivial. Typical proofs apply the characterization of compactness via
cluster points of filters (see e.g.\ the proof of~\cite[Lemma 10.2.1,
page 101]{bourbaki:topology1}).  We offer a proof of a slight
generalization of~\ref{compactness:universal}($\Leftarrow$) that is
closely related to, and inspired by those of
\cite[Lemma~4.4]{escardo:heckmann:functionspace}
and~\cite[Theorem~9.5]{escardo:barbados}. This argument will be reused
later to prove a more general fact about relative compactness
(Section~\ref{relative}).

Recall that a collection $\mathcal{C}$ of open sets is called directed
if for any finite set $\mathcal{S} \subseteq \mathcal{C}$ there is $U
\in \mathcal{C}$ with $\bigcup {\mathcal{S}} \subseteq U$.  Any
collection of open sets can be made directed by adding the finite
unions of its members. Hence a set $Q$ is compact if and only if every
directed open cover of $Q$ has a member that covers $Q$. 
\begin{lemma} \label{projection:nontrivial}
  Let $Q$ be a subset of a space $X$. If for every space $Z$ and any
  open set $W \subseteq Z \times X$, the set $ \{ z \in Z \mid \{ z \}
  \times Q \in W \}$ is open, then $Q$ is compact.
\end{lemma}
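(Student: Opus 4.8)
The plan is to prove the contrapositive: assuming $Q$ is not compact, I will construct a specific space $Z$ and an open set $W \subseteq Z \times X$ for which the set $\{ z \in Z \mid \{z\} \times Q \subseteq W\}$ fails to be open. The natural candidate for $Z$ is built from the directed open covers of $Q$. Since $Q$ is not compact, by the remark preceding the lemma there is a directed open cover $\mathcal{C}$ of $Q$ no single member of which covers $Q$. I would take $Z$ to be the set $\mathcal{C} \cup \{\infty\}$ for a fresh point $\infty$, topologized so that $\infty$ is a limit point that every member of $\mathcal{C}$ approaches.

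Concretely, I would declare a subset of $Z$ open if either it omits $\infty$ entirely, or it contains $\infty$ together with a set of the form $\{\, U \in \mathcal{C} \mid U \supseteq U_0 \,\}$ (an ``upward tail'' in the directed order) for some $U_0 \in \mathcal{C}$; one checks this is a topology using directedness of $\mathcal{C}$. The key design feature is that every neighbourhood of $\infty$ contains cofinally many members of $\mathcal{C}$. Next I would define $W$ by putting $(z,x) \in W$ iff $z \in \mathcal{C}$ and $x \in z$ (reading $z$ as an open set of $X$), i.e.\ the point $x$ lies in the open set named by $z$; the pairs with first coordinate $\infty$ are excluded from $W$. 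The verification that $W$ is open in $Z \times X$ is the routine check that for $(U,x) \in W$ with $x \in U$, the product neighbourhood $\{\, U' \in \mathcal{C} \mid U' \supseteq U \,\} \times U$ lies in $W$ (again using directedness and that each such $U'$ contains $U \ni x$, so contains $x$).

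With these choices, for $z = U \in \mathcal{C}$ the slice condition $\{U\} \times Q \subseteq W$ says exactly that $Q \subseteq U$, which fails for every single $U$ by our choice of $\mathcal{C}$; and for $z = \infty$ the condition fails because no pair $(\infty,x)$ lies in $W$ (assuming $Q$ is nonempty). So the set $S \eqdef \{ z \in Z \mid \{z\} \times Q \subseteq W\}$ is in fact empty, which is open. This shows my first naive construction is too crude, and signals where the real care is needed.

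The fix, and the main obstacle, is to arrange that the slice set $S$ is nonempty but fails to be open precisely at a distinguished point. I would instead add the point $\infty$ so that its slice \emph{does} satisfy the covering condition while no neighbourhood of $\infty$ does, forcing $S$ to contain $\infty$ as a non-interior point. The cleanest route is to redefine $W$ so that $(\infty,x) \in W$ for all $x \in X$ (include the whole slice over $\infty$), while keeping $(U,x) \in W$ iff $x \in U$ for $U \in \mathcal{C}$; openness of $W$ at points $(\infty,x)$ then needs a neighbourhood of $\infty$ whose members all contain a fixed neighbourhood of $x$, which is where continuity of the indexing and directedness must be exploited carefully. Then $\infty \in S$ since $\{\infty\} \times Q \subseteq W$, but every basic neighbourhood of $\infty$ contains some $U \in \mathcal{C}$ with $Q \not\subseteq U$, so $U \notin S$; hence $S$ is not open, contradicting the hypothesis and yielding compactness of $Q$. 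The delicate point throughout is choosing the topology on $Z$ and the open set $W$ so that $W$ is genuinely open while the failure of openness of $S$ at $\infty$ encodes exactly the failure of the finite-subcover property.
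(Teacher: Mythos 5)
Your first construction is correctly diagnosed as failing, but your ``fix'' --- the construction you actually propose as the proof --- has a genuine gap: the set $W$ you define there is simply not open, and no amount of care can make it so. You put $(\infty,x) \in W$ for \emph{all} $x \in X$, while keeping $(U,x) \in W$ iff $x \in U$ for $U \in \mathcal{C}$. Now take any $x \notin \bigcup\mathcal{C}$ (such points exist whenever the cover $\mathcal{C}$ of $Q$ does not happen to cover all of $X$, which you cannot assume). Then $(\infty,x) \in W$, but every neighbourhood $N$ of $\infty$ in your topology contains some $U \in \mathcal{C}$, so a product neighbourhood $N \times O \subseteq W$ with $x \in O$ would force $O \subseteq U$, contradicting $x \in O$ and $x \notin U \subseteq \bigcup\mathcal{C}$. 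Your appeal to ``continuity of the indexing'' cannot rescue this: that notion (Definition~\ref{compact:indexed:def}, used in Lemma~\ref{compact:indexed}) is not a hypothesis here, and the obstruction just computed is absolute. So the crucial verification that you defer --- openness of $W$ --- is exactly where the proposal breaks.

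The repair is small but it is precisely the missing idea: the fibre of $W$ over $\infty$ must be $\bigcup\mathcal{C}$, not $X$. Define $(\infty,x) \in W$ iff $x \in \bigcup\mathcal{C}$. Openness at $(\infty,x)$ is then immediate: choose $U_0 \in \mathcal{C}$ with $x \in U_0$ and note that $\bigl(\{\infty\} \cup \{U \in \mathcal{C} \mid U \supseteq U_0\}\bigr) \times U_0 \subseteq W$; openness at points $(U,x)$ with $U \in \mathcal{C}$ works as in your first attempt. The slice set $S$ is then $\{\infty\}$: indeed $Q \subseteq \bigcup\mathcal{C}$ puts $\infty$ in $S$, no $U \in \mathcal{C}$ is in $S$ since no member covers $Q$, and $\{\infty\}$ is not open because every neighbourhood of $\infty$ contains some $U \in \mathcal{C}$. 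This contradicts the hypothesis and completes your contrapositive argument. It is worth noting that the repaired construction is exactly a two-point-level miniature of the paper's proof: there, $Z$ consists of \emph{all} open sets of $X$, the distinguished point $\bigcup\mathcal{C}$ plays the role of your $\infty$ (condition (2) on the topology makes its neighbourhoods contain members of $\mathcal{C}$, like your tails), and $W$ is the membership relation $\{(U,x) \mid x \in U\}$ --- whose fibre over $\bigcup\mathcal{C}$ is automatically $\bigcup\mathcal{C}$, building in the correction you need; the paper also argues directly (every directed open cover has a covering member) rather than by contradiction, but that difference is cosmetic.
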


\begin{proof}
  Let $\mathcal{C}$ be a directed open cover of~$Q$.

  We first construct a space~$Z$ from $X$ and $\mathcal{C}$: its
  points are the open sets of~$X$, and $V \subseteq Z$ is open iff
  (1)~$U \in V$ and $U \subseteq U' \in Z$ together imply $U' \in V$,
  and (2)~if $\bigcup \mathcal{C} \in V$ then $U \in V$ for some $U
  \in \mathcal{C}$.  Such open sets are readily seen to form a
  topology\footnote{This is like the Scott topology, but defined with
    respect to one particular directed set, rather than all directed
    sets. One cannot use the Scott topology for this proof, as, in
    general, it doesn't give rise to openness of the set $W$
    constructed in the proof --- see e.g.\ \cite[Corollary
    4.6]{escardo:heckmann:functionspace}.}, using the fact that
  $\mathcal{C}$ is directed, and if $U$ is an open subset of a member
  of~$\mathcal{C}$ then $\up U \eqdef \{ U' \in Z \mid U \subseteq
  U'\}$ is clearly
  open. 

  Next, we take $W \eqdef \{ (U,x) \in Z \times X \mid x \in U \}$.
  To show that $W$ is open, let $(U,x) \in W$ and consider two cases.
  (1) $x \in \bigcup \mathcal{C}$: Then $x \in U'$ for some $U' \in
  \mathcal{C}$ with $x \in U'$, and hence $(U,x) \in \up (U \cap U')
  \times (U \cap U') \subseteq W$.  (2) $x \not\in
  \bigcup \mathcal{C}$: Then $U \not\subseteq \bigcup \mathcal{C}$ and
  hence
  $\up U$ is open, and $(U,x) \in \up U \times U \subseteq W$. 

  Finally, by the hypothesis, the set $V \eqdef \{ U \in Z \mid \{ U
  \} \times Q \subseteq W \}$ is open, and clearly $U \in V$ iff $Q
  \subseteq U$.  Hence $ \bigcup \mathcal{C} \in V$ and so some member
  of $\mathcal{C}$ is in $V$, that is, covers $Q$, by construction of
  the topology of $Z$, as required.
\end{proof}

\medskip

For future reference, we summarize part of the above development as
follows:
\begin{lemma} \label{compact:set}
  The following are equivalent for any subset $Q$ of any topological
  space~$X$.
\begin{enumerate}
\item  $Q$ is compact.
\item For every space $Z$, the set $ \{ z \in Z \mid \forall q \in Q.
  (z,q) \in W \} $ is open whenever the set $W \subseteq Z \times X$
  is open.
\item For every space $Z$, the set $ \{ z \in Z \mid \{ z \} \times Q
  \subseteq W \} $ is open whenever the set $W \subseteq Z \times X$
  is open.
\end{enumerate}
\end{lemma}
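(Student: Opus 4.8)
The plan is to observe that essentially all the work has already been done, so that the lemma follows by assembling the preceding results together with one trivial reformulation. The first thing I would do is dispose of the equivalence of items~(2) and~(3): the predicate $\forall q \in Q.\,(z,q) \in W$ holds precisely when every point of $\{z\} \times Q$ lies in $W$, i.e.\ when $\{z\} \times Q \subseteq W$. Thus the two displayed sets are literally equal for each space $Z$ and each open $W \subseteq Z \times X$, so (2)~$\Leftrightarrow$~(3) requires no further argument. This is the same logical-versus-geometric rephrasing already noted after Theorem~\ref{compactness:universal}.

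For (1)~$\Rightarrow$~(3) I would specialize Lemma~\ref{compact:indexed}. Take $Y$ to be the one-point space $\{y\}$ and consider the constant family given by $Q_y = Q$. Since $Y$ has a single point, the continuous-indexing condition of Definition~\ref{compact:indexed:def} is met trivially: for any open $U \supseteq Q$ one may take $T = Y$, and then $Q_t = Q \subseteq U$ for every $t \in T$. Lemma~\ref{compact:indexed} then asserts that the set $\{(z,y) \in Z \times Y \mid \forall x \in Q_y.\,(z,x) \in W\}$ is open. Identifying $Z \times Y$ with $Z$ (projection to $Z$ is a homeomorphism when $Y$ is a point), this set becomes exactly $\{z \in Z \mid \{z\} \times Q \subseteq W\}$, which is item~(3).

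The converse (3)~$\Rightarrow$~(1) is precisely the content of Lemma~\ref{projection:nontrivial}, so I would simply invoke it: under hypothesis~(3), that lemma yields compactness of $Q$. This is the step carrying the genuine mathematical weight — the construction there of an ad hoc $\mathcal{C}$-relative, Scott-like topology on the open sets of $X$, paired with the open set $W = \{(U,x) \mid x \in U\}$, is what forces a directed open cover of $Q$ to contain a single covering member. Because we are entitled to treat that lemma as already proved, the only thing to verify is that its hypothesis matches item~(3) verbatim, which it does.

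Putting these together closes the cycle (1)~$\Rightarrow$~(3)~$\Rightarrow$~(1) alongside (2)~$\Leftrightarrow$~(3), establishing the three-way equivalence. I do not anticipate any real obstacle, since each implication reduces to an already-established result; the only point demanding a moment's care is the bookkeeping in the (1)~$\Rightarrow$~(3) step, namely confirming that the degenerate one-point indexing genuinely satisfies the continuity hypothesis of Lemma~\ref{compact:indexed} and that the homeomorphism $Z \times Y \cong Z$ transports openness correctly — both of which are immediate.
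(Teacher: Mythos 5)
Your proposal is correct and matches the paper's intent exactly: the paper offers no separate proof, stating only that the lemma ``summarizes part of the above development,'' which is precisely the assembly you carry out --- Lemma~\ref{compact:indexed} specialized to a one-point index space for (1)~$\Rightarrow$~(3), Lemma~\ref{projection:nontrivial} for (3)~$\Rightarrow$~(1), and the literal equality of the sets in (2) and~(3).
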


\pagebreak[3]

\section{Sample ``synthetic'' proofs of old theorems} \label{sample}

We redevelop the synthetic proofs of \cite[Chapter
9]{escardo:barbados} almost literally, but without invoking the
function-space machinery or the lambda-calculus.

\begin{numbered}
  If $X$ is Hausdorff and $Q \subseteq X$ is compact, then $Q$ is
  closed in~$X$.
\end{numbered}
\begin{proof}
  Because $X$ is Hausdorff, the complement $W$ of the diagonal is
  open.  Hence $X \setminus Q = \{ x \in X \mid \forall q \in Q. x \ne
  q \} = \{ x \in X \mid \forall q \in Q. (x,q) \in W\}$ is open by
  Lemma~\ref{compact:set}, and so $Q$ is closed.
\end{proof}

\begin{numbered}
  If $X$ is compact and $F \subseteq X$ is closed then $F$ is compact.
\end{numbered}
\begin{proof}
  We use Lemma~\ref{compact:set}. Let $Z$ be any space
  and $W \subseteq Z \times X$ be open. We have to show that $V \eqdef
  \{ z \in Z \mid \forall x \in F. (z,x) \in W\}$ is open. But $z \in
  V$ iff $\forall x \in X.  x \in F \implies (z,x) \in W$ iff $\forall
  x \in X. x \not\in F \vee (z,x) \in W$.  Hence $V = \{ z \in Z \mid
  \forall x \in X.  (z,x) \in W' \}$ where $W' = (Z \times (X
  \setminus F)) \cup W$, and $V$ is open by compactness of $X$,
  openness of $W'$ and~\ref{compactness:universal}($\Rightarrow$).
\end{proof}

\begin{numbered}
  If $f \colon X \to Y$ is continuous and the set $Q \subseteq X$ is
  compact, then so is $f(Q)$.
\end{numbered}
\begin{proof}
  For any space $Z$ and any open set $W \subseteq Z \times Y$, we have
  that $\{ z \in Z \mid \forall y \in f(Q).(z,y) \in W \} = \{ z \in Z
  \mid \forall x \in Q. (z,f(x)) \in W \}$, which is open by
  compactness of $Q$, because the set $W'$ defined by $(z,x) \in W'$
  iff $(z,f(x)) \in W$ is open by continuity of $f$.
\end{proof}

\begin{numbered}
  If $X$ and $Y$ are compact spaces then so is $X \times Y$.
\end{numbered}
\begin{proof}
  We show that $V \eqdef \{ z \in Z \mid \forall (x,y) \in X \times Y.
  (z,x,y) \in W \}$ is open for any space $Z$ and any open set $W
  \subseteq Z \times X \times Y$. By compactness of $Y$, the set $W'
  \eqdef \{ (z,x) \in Z \times X \mid \forall y \in Y. (z,x,y) \in
  W\}$ is open, and, by compactness of~$X$, the set $\{ z \in Z \mid
  \forall x \in X. (z,x) \in W' \} = V$ is open, as required.
\end{proof}

\medskip

Although we don't need the function-space machinery to develop the
core of topology, we still can use the function-space-free
synthetic approach to prove theorems about function spaces, as we have
done in Section~\ref{original}. Moreover, the abstract definition
of function space as an exponential again suffices.

\begin{numbered}
  If $Y$ is Hausdorff, then so is the exponential $Y^X$ if it exists.
\end{numbered}
\begin{proof}
  The codiagonal of $Y^X$ is $\{ (f,g) \in Y^X \times
  Y^X \mid \exists x \in X. f(x) \ne g(x) \} = \bigcup_{x \in X} \{
  (f,g) \in Y^X \times Y^X \mid f(x) \ne g(x) \} $, which is a union
  of open sets, because $W \subseteq Y^X \times Y^X$ defined
  by $(f,g) \in W$ iff $f(x) \ne g(x)$ is open, using openness
  of the codiagonal of $Y$ and continuity of the evaluation map $Y^X
  \times X \to Y$.
\end{proof}

For the proof of the following dual proposition, recall that a space
is discrete iff its diagonal is open.
\begin{numbered}
  If $X$ is compact and $Y$ is discrete, then the exponential $Y^X$ is
  discrete if it exists.
\end{numbered}
\begin{proof}
  The diagonal of $Y^X$ is $\{ (f,g) \in Y^X \times Y^X \mid \forall x
  \in X. f(x) = g(x) \}$, which is open by compactness of $X$, because
  the set $W \subseteq Y^X \times Y^X \times X$ defined by
  $(f,g,x) \in W$ iff $f(x) = g(x)$ is open, using openness of
  the diagonal of $Y$ and continuity of the evaluation map.
\end{proof}

As discussed above, these last two propositions don't require an
intrinsic description of the topology of~$Y^X$. A partial description
is given by the following:
\begin{numbered}
  If the exponential $Y^X$ exists, and if $Q \subseteq X$ is compact
  and $V \subseteq Y$ is open, then the set $N(Q,V) \eqdef \{ f \in
  Y^X \mid f(Q) \subseteq V \}$ is open.
\end{numbered}
\begin{proof}
  $f \in N(Q,V)$ iff $\forall q \in Q. f(q) \in V$. The result then
  follows from the fact that $W \subseteq Y^X \times X$
  defined by $(f,x) \in W$ iff $f(x) \in V$ is open, using continuity
  of the evaluation map.
\end{proof}

\section{Proper maps}
\label{proper}

\medskip Recall that a continuous map $f \colon X \to Y$ is called proper if
the product map \[ \id_Z \times f \colon Z \times X \to Z \times Y \]
is closed for every space $Z$, where $\id_Z \colon Z \to Z$ is the
identity map~\cite{bourbaki:topology1}.
\begin{theorem} \label{proper:charac}
The following are equivalent for any continuous map $f \colon X \to Y$.
\begin{enumerate}
\item \label{proper:1} $f$ is proper.
\item \label{proper:2} For every space $Z$ and every open set $W
  \subseteq Z \times X$, the set \[ \{ (z,y) \in Z \times Y \mid \{z\}
  \times f^{-1}\{y\} \subseteq W\} \] is open.
\item \label{proper:3} $f$ is closed and the set $f^{-1}(Q)$ is compact for
  every compact set $Q \subseteq Y$.
\item \label{proper:4} $f$ is closed and the set $f^{-1}\{y\}$ is compact for
  every point $y \in Y$.
\item \label{proper:5} $\{ f^{-1}\{y\} \mid y \in Y \}$ is a
  continuously indexed family of compact sets of $X$.
\end{enumerate}
\end{theorem}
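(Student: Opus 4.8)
The plan is to establish the five conditions as equivalent by running the cycle $(\ref{proper:2})\Rightarrow(\ref{proper:1})\Rightarrow(\ref{proper:4})\Rightarrow(\ref{proper:5})\Rightarrow(\ref{proper:2})$, and then folding condition $(\ref{proper:3})$ in by proving $(\ref{proper:5})\Rightarrow(\ref{proper:3})\Rightarrow(\ref{proper:4})$. The guiding idea is that all the real topological content has already been packaged: the difficult ``openness'' direction is supplied by Lemma~\ref{compact:indexed}, while the passage from such openness back to compactness is supplied by Lemmas~\ref{projection:nontrivial} and~\ref{compact:set}. The remaining work is De~Morgan bookkeeping together with the translation between closedness of a map and openness of sets of the form $\{y \mid f^{-1}\{y\} \subseteq U\}$.

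The easy arcs of the cycle I would dispatch as follows. For $(\ref{proper:5})\Rightarrow(\ref{proper:2})$ I apply Lemma~\ref{compact:indexed} verbatim to the family $Q_y = f^{-1}\{y\}$. For $(\ref{proper:2})\Rightarrow(\ref{proper:1})$, given a closed $C \subseteq Z \times X$ with complement $W$, I observe that $(z,y) \in (\id_Z \times f)(C)$ iff $\exists x \in f^{-1}\{y\}.\,(z,x) \in C$, so the complement of the image is exactly $\{(z,y) \mid \{z\}\times f^{-1}\{y\} \subseteq W\}$, which is open by $(\ref{proper:2})$; since $Z$ is arbitrary, $f$ is proper. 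For $(\ref{proper:4})\Rightarrow(\ref{proper:5})$ the fibres are compact by hypothesis, and continuous indexing is the observation that $\{y \mid f^{-1}\{y\} \subseteq U\} = Y \setminus f(X \setminus U)$ is open because $f$ is closed. The inclusion of $(\ref{proper:3})$ is equally light: $(\ref{proper:3})\Rightarrow(\ref{proper:4})$ is immediate since a singleton is compact, while for $(\ref{proper:5})\Rightarrow(\ref{proper:3})$ closedness of $f$ is again the $Y\setminus f(X\setminus U)$ computation, and compactness of $f^{-1}(Q)$ for compact $Q$ follows by first using Lemma~\ref{compact:indexed} to see that $M = \{(z,y) \mid \{z\}\times f^{-1}\{y\}\subseteq W\}$ is open, then using compactness of $Q\subseteq Y$ with Lemma~\ref{compact:set} to conclude that $\{z \mid \forall y \in Q.\,(z,y)\in M\} = \{z \mid \{z\}\times f^{-1}(Q)\subseteq W\}$ is open, whence $f^{-1}(Q)$ is compact by Lemma~\ref{compact:set}.

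The one genuinely ``reversing'' step is $(\ref{proper:1})\Rightarrow(\ref{proper:4})$, where properness must be converted into compactness of fibres. Closedness of $f$ comes for free by taking $Z$ to be a one-point space, so that $\id_Z \times f$ is $f$ up to homeomorphism. For a fixed $y_0 \in Y$ I would verify the hypothesis of Lemma~\ref{compact:set}: given any space $Z$ and open $W \subseteq Z \times X$ with closed complement $C$, the map $z \mapsto (z,y_0)$ is continuous, so the slice $\{z \mid (z,y_0) \in (\id_Z\times f)(C)\}$ is closed; but this slice is precisely $\{z \mid \exists x \in f^{-1}\{y_0\}.\,(z,x) \in C\}$, so its complement $\{z \mid \{z\}\times f^{-1}\{y_0\} \subseteq W\}$ is open, and Lemma~\ref{compact:set} then gives compactness of $f^{-1}\{y_0\}$.

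I expect this last implication to be the main obstacle, not because any single step is hard but because it is the point where one must invoke the non-trivial direction encapsulated in Lemmas~\ref{projection:nontrivial} and~\ref{compact:set}; the temptation to avoid is trying to read compactness of the fibres directly off the definition of properness rather than routing it through the closed-projection characterization already established. Everything else is De~Morgan duality together with the two stored lemmas, and I would take care only that each set rewrite---$\{z\}\times Q \subseteq W$ versus $\forall x \in Q.\,(z,x)\in W$, and image versus fibrewise existential---is performed cleanly.
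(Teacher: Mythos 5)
Your proof is correct, and it rests on the same pillars as the paper's: Lemma~\ref{compact:indexed} for $(\ref{proper:5})\Rightarrow(\ref{proper:2})$, Lemma~\ref{compact:set} for converting openness statements into compactness and back, and De~Morgan bookkeeping identifying $\{(z,y) \mid \{z\}\times f^{-1}\{y\}\subseteq W\}$ with the complement of $(\id_Z\times f)(C)$. The difference is in the decomposition. The paper dispatches $(\ref{proper:1})\Leftrightarrow(\ref{proper:2})$ in one stroke, by computing $(\id_Z \times f)^{-1}\{(z,y)\} = \{z\}\times f^{-1}\{y\}$ and applying Lemma~\ref{reformulation} to $g = \id_Z\times f$; having both directions, it never again touches the definition of properness, since compactness of $f^{-1}(Q)$ (hence of fibres) is read off from $(\ref{proper:2})$ by two applications of Lemma~\ref{compact:set}, giving the chain $(\ref{proper:2})\Rightarrow(\ref{proper:3})\Rightarrow(\ref{proper:4})\Rightarrow(\ref{proper:5})\Rightarrow(\ref{proper:2})$. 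You prove only $(\ref{proper:2})\Rightarrow(\ref{proper:1})$, and must therefore mine properness directly for fibre compactness in $(\ref{proper:1})\Rightarrow(\ref{proper:4})$: your slice argument --- pull the closed set $(\id_Z\times f)(C)$ back along the continuous map $z\mapsto(z,y_0)$ and feed the open complement of the resulting closed slice to Lemma~\ref{compact:set} --- is sound, and in effect re-proves the $y=y_0$ slice of the implication $(\ref{proper:1})\Rightarrow(\ref{proper:2})$ that the paper obtains wholesale from Lemma~\ref{reformulation}. The cost of your route is mild redundancy: six arrows where five suffice, with your $(\ref{proper:5})\Rightarrow(\ref{proper:3})$ repeating, from a different starting point, exactly the two-applications-of-Lemma~\ref{compact:set} argument the paper runs once from $(\ref{proper:2})$. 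What it buys is a sharper localization of where the hard direction (Lemma~\ref{projection:nontrivial}, hidden inside Lemma~\ref{compact:set}) is genuinely invoked, and the observation that only the slices of condition $(\ref{proper:2})$, not its full two-variable openness, are needed to recover compact fibres. Had you proved $(\ref{proper:1})\Rightarrow(\ref{proper:2})$ instead --- the same De~Morgan computation you already perform for the converse, just read in the other direction --- the slice argument could be deleted and your proof would collapse onto the paper's.
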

We first refomulate closedness in terms of open sets.  By taking
complements, a continuous map $g \colon A \to B$ is closed iff for
every open set $U\subseteq A$, the set $B \setminus g(A \setminus U)$
is open. But an easy calculation shows that this set is $ \{ b \in B
\mid g^{-1}\{b\}\subseteq U \}$. This proves:
\begin{lemma} \label{reformulation}
A continuous map $g \colon A \to B$ is
  closed if and only if for every open set $U \subseteq A$, the set $\{
  b \in B \mid g^{-1}\{b\} \subseteq U \}$ is open.
\end{lemma}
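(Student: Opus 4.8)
The plan is to reduce the statement to an elementary complementation calculation, since there is no topology involved beyond the very definition of a closed map. First I would recall that, by definition, $g$ is closed precisely when it carries closed subsets of $A$ to closed subsets of $B$. As the closed subsets of $A$ are exactly the complements $A \setminus U$ of open sets $U \subseteq A$, and a subset of $B$ is closed exactly when its complement is open, closedness of $g$ is equivalent to demanding that $B \setminus g(A \setminus U)$ be open for every open $U \subseteq A$. This is just a dualization of both the domain and codomain conditions through complements, and it has the effect of converting the quantification over closed sets of $A$ into a quantification over open sets of $A$, matching the shape of the desired conclusion.

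The second step is to identify the set $B \setminus g(A \setminus U)$ explicitly. I would argue pointwise: for $b \in B$, the condition $b \notin g(A \setminus U)$ says that no point of $A \setminus U$ maps to $b$, i.e.\ every $a$ with $g(a) = b$ lies in $U$, which is precisely the containment $g^{-1}\{b\} \subseteq U$. Hence $B \setminus g(A \setminus U) = \{ b \in B \mid g^{-1}\{b\} \subseteq U\}$. Substituting this identity into the reformulation obtained in the first step yields the lemma, and because the whole argument is a single chain of equivalences, both directions of the ``if and only if'' fall out at once.

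I do not expect a genuine obstacle; the only things to get right are the bookkeeping of complements --- in particular not conflating $g(A \setminus U)$ with $A \setminus g^{-1}(U)$ --- and the correct statement of the pointwise equivalence, including the degenerate case in which $g^{-1}\{b\}$ is empty. In that case the containment $g^{-1}\{b\} \subseteq U$ holds vacuously and $b$ is indeed outside the image $g(A \setminus U)$, so the identity remains valid. Thus the proof rests entirely on the definitions of closed map and of the complement (closed-set) structure on $A$ and $B$, with no appeal to compactness or to any of the product-topology machinery developed earlier.
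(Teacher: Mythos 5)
Your proposal is correct and is essentially the paper's own proof: the paper likewise dualizes the definition of closed map through complements to reduce the claim to openness of $B \setminus g(A \setminus U)$ for every open $U \subseteq A$, and then identifies this set with $\{ b \in B \mid g^{-1}\{b\} \subseteq U \}$ (a step the paper calls ``an easy calculation'' and which you carry out explicitly, including the vacuous case). No differences in substance.
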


\medskip
\begin{proof} of Theorem~\ref{proper:charac}.

  \medskip
  $(\ref{proper:1}) \Leftrightarrow (\ref{proper:2})$: Calculate
  that $(\id_Z \times f)^{-1}\{(z,y)\} = \{z\} \times f^{-1}\{y\}$ and then
  apply Lemma~\ref{reformulation} to $g = \id_Z \times f$.

  \medskip

  ($\ref{proper:1},\ref{proper:2}) \Rightarrow (\ref{proper:3}$):
  Considering the case in which $Z$ is the one-point space, we see
  that any proper map is closed.
  To show that $f^{-1}(Q)$ is compact, let $Z$ be any space and $W
  \subseteq Z \times X$ be an open set. Then the set $T \eqdef \{
  (z,y) \mid \{z\} \times f^{-1}\{y\} \subseteq W\}$ is open by
  hypothesis, and hence the set $U \eqdef \{ z \in Z \mid \{ z \}
  \times Q \subseteq T \} $ is open by Lemma~\ref{compact:set}. But $z
  \in U$ iff $(z,y) \in T$ for all $y \in Q$, iff $\{z\} \times
  f^{-1}\{y\} \subseteq W$ for all $y \in Q$, iff $\{z\} \times
  f^{-1}(Q) \subseteq W$.  Because $Z$ and $W$ are arbitrary, a second
  application of Lemma~\ref{compact:set} shows that $f^{-1}(Q)$ is
  compact, as required.

  \medskip ($\ref{proper:3}) \Rightarrow (\ref{proper:4}$): Singletons
  are compact.

  \medskip
  ($\ref{proper:4}) \Rightarrow (\ref{proper:5}$): 
  By Lemma~\ref{reformulation} applied to $g=f$, the set $\{ y \in Y
  \mid f^{-1}\{y\} \subseteq U \}$ is open for every $y \in Y$ and
  every open set $U
  \subseteq X$. 

  \medskip ($\ref{proper:5}) \Rightarrow (\ref{proper:2}$): This
  follows directly from Lemma~\ref{compact:indexed}.
\end{proof}

The above characterizations~(\ref{proper:3}) and~(\ref{proper:4}) of
propriety are of course well known. The development of synthetic
proofs was left as an exercise in~\cite{escardo:barbados}.
Characterization~(\ref{proper:2}) is clearly just a reformulation of
the definition using the language of open sets.
Formulation~(\ref{proper:5}) seems to be new.

We conclude this section with a well known fact about proper maps.
\begin{numbered}
 If $X$ is compact and $Y$ is Hausdorff, then any continuous map $f
 \colon X \to Y$ is proper.
\end{numbered}
\begin{proof}
 To apply the characterization~\ref{proper:charac}(\ref{proper:2}),
 let $Z$ be any space and $W \subseteq Z \times X$ be open. We have
 to show that $T=\{ (z,y) \in Z \times Y \mid \{z\} \times
 f^{-1}\{y\} \subseteq W\}$ is open. Now $(z,y) \in T$ iff $\forall x
 \in X$, $f(x)=y$ implies $(z,x) \in W$, iff $\forall x \in X$, $f(x)
 \ne y$ or $(z,x) \in W$
\end{proof}

\section{Relative compactness} \label{relative}

For some topological questions regarding local compactness and
function spaces, it is fruitful to consider the domain-theoretic
way-below relation on open sets~\cite{gierz:domains}. Again in a
context pertaining to function spaces, Escard\'o, Lawson and
Simpson~\cite{escardo:lawson:simpson} found it profitable to
generalize this to arbitrary subsets of topological spaces.

For subsets $S$ and $T$ of a topological space $X$, we define
\begin{eqnarray*}
S \waybelow T & \iff & \text{every cover of $T$ by open sets of $X$} \\
              &      & \text{has a finite subcollection that covers~$S$.}
\end{eqnarray*}
In this case one says that $S$
is \emph{way below} $T$, or \emph{compact relative} to $T$.
Then it is immediate that a set is
compact iff it is compact relative to itself.
\newcommand{\subsetint}{\Subset}
We also define
\begin{eqnarray*}
S \subsetint T & \iff & S  \subseteq T^\circ.
\end{eqnarray*}
The following was formulated as~\cite[Lemma
4.2]{escardo:lawson:simpson}:
\begin{numbered}\label{equivalence}
  Let $X$ and $Y$ be topological spaces.
\begin{enumerate}
\item \label{equiv:i} If $F\waybelow X$ is closed, then $F$
  is compact.
\item \label{equiv:ii} If $X$ is Hausdorff and $S \waybelow T$ holds in $X$,
  then $\overline S \subseteq T$.
\item \label{equiv:iii} If $f \colon X\to Y$ is continuous and $S\waybelow
  T$ in $X$, then $f(S)\waybelow f(T)$ holds in $Y$.
\item \label{equiv:iv} If $S \waybelow T$ in $X$ and $A
  \waybelow B$ in $Y$, then $S \times A \waybelow T \times B$
  holds in $X \times Y$.
\item \label{equiv:v}
  If $W \subseteq Y \times X$ is open and $S \waybelow T$ holds in $X$, then
  \[ \{y \in Y \mid
  \{y\} \times T \subseteq W\} \subsetint \{y \in Y \mid \{y\} \times
  S \subseteq W\}.
  \]
\end{enumerate}
\end{numbered}

Assertion (\ref{equiv:i}) generalizes the fact that a closed subset of
a compact space is compact, (\ref{equiv:ii})~the statement that a
compact subset of a Hausdorff space is closed, (\ref{equiv:iii})~the
fact that continuous maps preserve compactness, and
(\ref{equiv:iv})~the Tychonoff theorem in the finite case.
In this section we prove (\ref{equiv:v}) and a converse,
generalizing~\ref{compactness:universal} and the development of
Section~\ref{self}, and use this to derive
(\ref{equiv:i})--(\ref{equiv:iv}), generalizing the development of
Section~\ref{sample}.  \pagebreak[3]

\pagebreak[4]
\begin{theorem} \label{relative:universal}
  The following are equivalent for any two subsets $S$ and $T$ of a
  topological space~$X$.
  \begin{enumerate}
  \item \label{thm:1} $S \waybelow T$.
  \item \label{thm:2} For every space $Z$ and every open set $W \subseteq Z \times X$,
  \[ \{z \in Z \mid
  \{z\} \times T \subseteq W\} \subsetint \{z \in Z \mid \{z\} \times S
  \subseteq W\}.
  \]
\item \label{thm:3} For every space $Z$, every $z \in Z$ and every
  open set $W \subseteq Z \times X$, \[ \{z\} \times T \subseteq W
  \implies \text{$V \times S \subseteq W$ for some neighbourhood $V$ of $z$}.\]
\item \label{thm:4} For every space $Z$ and all $M,N \subseteq Z
  \times X$,
       \[
       M \subsetint N \implies \{z \in Z \mid \{z\} \times T
       \subseteq M\} \subsetint \{z \in Z \mid \{z\} \times S \subseteq
       N\}.
       \]
  \end{enumerate}
\end{theorem}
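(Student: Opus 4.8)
The plan is to prove Theorem~\ref{relative:universal} by establishing a cycle of implications, $(\ref{thm:1}) \Rightarrow (\ref{thm:2}) \Rightarrow (\ref{thm:3}) \Rightarrow (\ref{thm:1})$, and then folding $(\ref{thm:4})$ into the cycle separately. The implication $(\ref{thm:1}) \Rightarrow (\ref{thm:2})$ should be the relative-compactness analogue of Lemma~\ref{compact:indexed}; I expect to mimic that proof almost verbatim. Concretely, assuming $S \waybelow T$ and an open $W \subseteq Z \times X$, I would take a point $z$ with $\{z\} \times T \subseteq W$, cover $T$ by product boxes $V_x \times U_x \subseteq W$ indexed by $x \in T$, and invoke $S \waybelow T$ to extract a finite subcollection $\{U_i \mid i \in I\}$ already covering $S$. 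Setting $V \eqdef \bigcap_{i \in I} V_i$ then yields an open neighbourhood of $z$ with $V \times S \subseteq W$, which says precisely that $z$ lies in the interior of $\{z' \mid \{z'\} \times S \subseteq W\}$, giving the $\subsetint$ relation.

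The step $(\ref{thm:2}) \Rightarrow (\ref{thm:3})$ should be a routine unwinding of the definition of $\subsetint$: membership of $z$ in the interior of $\{z' \mid \{z'\} \times S \subseteq W\}$ means exactly that some open neighbourhood $V$ of $z$ is contained in that set, i.e.\ $V \times S \subseteq W$. The genuinely substantial direction is $(\ref{thm:3}) \Rightarrow (\ref{thm:1})$, which is the relative generalization of Lemma~\ref{projection:nontrivial}, and I expect this to be the main obstacle. The strategy is to reuse the topology constructed in that lemma: given an open cover $\mathcal{C}$ of $T$, make it directed, build the space $Z$ whose points are the open sets of $X$ with the $\mathcal{C}$-relative Scott-like topology from Lemma~\ref{projection:nontrivial}, and take the same $W \eqdef \{(U,x) \mid x \in U\}$, which is open by the identical case analysis. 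The new ingredient is that I must start from the point $\bigcup \mathcal{C} \in Z$, which satisfies $\{\bigcup\mathcal{C}\} \times T \subseteq W$ since $\mathcal{C}$ covers $T$; then $(\ref{thm:3})$ supplies an open neighbourhood $V$ of $\bigcup\mathcal{C}$ in $Z$ with $V \times S \subseteq W$. By the defining clause~(2) of the topology, $V$ must contain some $U \in \mathcal{C}$, and $U \times S \subseteq W$ forces $S \subseteq U$; but then the finitely many members of $\mathcal{C}$ whose union is $U$ (recall $\mathcal{C}$ was only made directed, so $U$ itself may already be a single member, or a finite join, depending on how directedness was arranged) witness $S \waybelow T$.

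Finally, for $(\ref{thm:4})$ I would show $(\ref{thm:4}) \Rightarrow (\ref{thm:2})$ trivially by taking $M = N = W$, since $W \subsetint W$ always holds, and close the loop with $(\ref{thm:2}) \Rightarrow (\ref{thm:4})$ or $(\ref{thm:1}) \Rightarrow (\ref{thm:4})$. The cleanest route is probably $(\ref{thm:1}) \Rightarrow (\ref{thm:4})$: given $M \subsetint N$, i.e.\ $M \subseteq N^\circ$, note that if $\{z\} \times T \subseteq M \subseteq N^\circ$ then applying the already-proved $(\ref{thm:2})$ with $W = N^\circ$ places $z$ in the interior of $\{z' \mid \{z'\} \times S \subseteq N^\circ\} \subseteq \{z' \mid \{z'\} \times S \subseteq N\}$, as required. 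The one point demanding care throughout is keeping straight the distinction between $\subsetint$ (containment in the interior) and ordinary inclusion, and verifying that the finite subcover extracted in the forward directions genuinely covers $S$ rather than merely $T$; this is exactly where the hypothesis $S \waybelow T$ rather than compactness of $T$ is used.
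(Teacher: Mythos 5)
Your proposal is correct and follows essentially the same route as the paper: the product-box cover plus finite-subcover argument for the forward direction, the space $Z$ and open set $W$ from Lemma~\ref{projection:nontrivial} (applied at the point $\bigcup\mathcal{C}$) for $(\ref{thm:3}) \Rightarrow (\ref{thm:1})$, and the substitutions $W = N^\circ$ and $M = N = W$ to fold in $(\ref{thm:4})$. The only differences are cosmetic: you route the forward direction through $(\ref{thm:2})$ rather than $(\ref{thm:3})$ (these differ only by unwinding the definition of interior, which is also how the paper relates them), and you spell out explicitly that the directed cover's member covering $S$ is a finite union of members of the original cover, a point the paper absorbs into its directed-cover reformulation of $\waybelow$ stated before Lemma~\ref{projection:nontrivial}.
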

\pagebreak[3]
\begin{proof}
  $(\ref{thm:2}) \Leftrightarrow (\ref{thm:3})$: By definition of
  interior.
  $(\ref{thm:2}) \Leftrightarrow (\ref{thm:4})$: Consider $W=N^\circ$
  in one direction and $M=N=W$ in the other.

 \medskip

 $(\ref{thm:1}) \Rightarrow (\ref{thm:3})$: Assume that $\{ z \}
 \times T \subseteq W$. Then for any $t \in T$, we have that $(z,t)
 \in W$ and hence there are open sets $U_t$ and $V_t$ with $(z,t) \in
 V_t \times U_t \subseteq W$. Because $T \subseteq \bigcup_{t \in T}
 U_t$ and $S \waybelow T$, there is a finite set $I \subseteq T$ such
 that $S \subseteq \bigcup_{i \in I} U_i$. Then $V \eqdef \bigcap_{i
   \in I} V_i$ is open and $z \in V$.  To show that $V \times S
 \subseteq W$, let $(v,s) \in V \times S$. Because $s \in S \subseteq
 \bigcup_{i \in I} U_i$, there is $j \in I$ such that $s \in U_j$, and
 because $V = \bigcap_{i \in I} V_i$ we have that $v \in V_j$. Hence
 $(v,s) \in V_j \times U_j \subseteq W$, as required.

 \medskip

 $(\ref{thm:3}) \Rightarrow (\ref{thm:1})$. To show that $S \waybelow
 T$, let $\mathcal{C}$ be a directed open cover of~$T$.  We have to
 conclude that $S \subseteq U$ for some $U \in \mathcal{C}$.
 We first construct a space $Z$ from $X$ and $\mathcal{C}$, and an
 open set $W \subseteq Z \times X$ as in the proof
 of~\ref{projection:nontrivial}.  Because $T \subseteq \bigcup
 \mathcal{C}$, we have that $\{\bigcup \mathcal{C}\} \times T
 \subseteq W$. Hence, by the hypothesis, $V \times S \subseteq W$ for
 some neighbourhood $V$ of $\bigcup \mathcal{C}$, which may be assumed
 to be open. By construction of the topology of $Z$, we have that $U
 \in V$ for some $U \in \mathcal{C}$. To show that $S \subseteq U$,
 concluding the proof, let $s \in S$. Then $(U,s) \in V \times S
 \subseteq W$, and hence $s \in U$, as required.
\end{proof}

\medskip

Notice that~\ref{compactness:universal} follows directly from
Theorem~\ref{relative:universal}$(\ref{thm:1} \Leftrightarrow
\ref{thm:2})$, because a set is open iff it is contained in its
interior.  Observe also that the implication $(\ref{thm:1})
\Rightarrow (\ref{thm:3})$ amounts to saying that if the relation
$\{y\} \times T \subseteq W$ holds, and if we make $T$ significantly
smaller by passing to a set way below, then we can make $\{y\}$
significantly bigger by passing to a whole neighbourhood so that the
relation will still hold. 
We now apply Theorem~\ref{relative:universal} to generalize some of
the proofs of Section~\ref{sample}.
\begin{numbered}
  If $X$ is Hausdorff and $S \waybelow T$, then $\overline S \subseteq T$.
\end{numbered}
\begin{proof}
  Because the complement $W \subseteq X \times X$ of the diagonal is
  open as $X$ is Hausdorff,
  Theorem~\ref{relative:universal}$(\ref{thm:1} \Rightarrow
  \ref{thm:2})$ shows that $X \setminus T = \{ x \in X \mid
  \{x\}\times T \subseteq W\} \subsetint \{ x \in X \mid \{x\}\times S
  \subseteq W\} = X \setminus S$, and hence $\overline S \subseteq T$.
\end{proof}

\begin{numbered}
  If $F$ is closed in $X$ and $F \waybelow X$, then $F$ is
  compact.
\end{numbered}
\begin{proof}
  Let $Z$ be any space and $W \subseteq Z \times X$ be open. Then $W'
  = (Z \times (X \setminus F)) \cup W$ is also open, and
  Theorem~\ref{relative:universal}$(\ref{thm:2} \Rightarrow
  \ref{thm:1})$ gives $M \eqdef \{ z \in Z \mid \{ z \} \times X
  \subseteq W' \} \subsetint N \eqdef \{ z \in Z \mid \{ z \} \times F
  \subseteq W' \}$. But one readily checks that $M$ and $N$ are equal
  to $\{ z \in Z \mid \forall x \in F. (z,x) \in W \}$, and hence,
  being contained in its own interior, this set is open. Because the
  space $Z$ and the open set $W \subseteq Z \times X$ are arbitrary,
  the desired result follows from Lemma~\ref{compact:set}.
\end{proof}

\begin{numbered}
  If $f \colon X \to Y$ is continuous and $S \waybelow T$ in $X$,
  then $f(S) \waybelow f(T)$ holds in $Y$.
\end{numbered}
\begin{proof}
  Let $Z$ be a space, $W \subseteq Z \times Y$ be open, and assume
  that $\{z\} \times f(T) \subseteq W$. Then $W' \eqdef (\id_Z \times
  f)^{-1}(W) = \{ (z,x) \in Z \times X \mid (z,f(x)) \in W \}$ is also
  open by continuity of $f$, and $\{z\} \times T \subseteq W'$. By
  Theorem~\ref{relative:universal}$(\ref{thm:1} \Rightarrow
  \ref{thm:3})$, there is a neighbourhood $V$ of $z$ with $V \times S
  \subseteq W'$. Hence $V \times f(S) \subseteq W$. Because the space
  $Z$, the open set $W \subseteq Z \times Y$ and the point $z \in Z$
  are arbitrary, Theorem~\ref{relative:universal}$(\ref{thm:3}
  \Rightarrow \ref{thm:1})$ shows that $f(S) \waybelow f(T)$, as
  required.
\end{proof}

\begin{numbered}
  If $S \waybelow T$ in $X$ and $A \waybelow B$ in $Y$,
  then $S \times A \waybelow T \times B$ holds in $X \times
  Y$.
\end{numbered}
\begin{proof}
  Let $Z$ be a space and let $M,N \subseteq Z \times X \times Y$
  with $M \subsetint N$.  Then, by two successive applications of
  Theorem~\ref{relative:universal}$(\ref{thm:1} \Rightarrow
  \ref{thm:4})$, we first have that
  \begin{gather*} M' \eqdef \{ (z,x) \in Z
  \times X \mid \{ (z,x)\} \times B \subseteq M \} \\ \subsetint  N'
  \eqdef \{ (z,x) \in Z \times X \mid \{ (z,x)\} \times A
  \subseteq N\}
  \end{gather*} and then that $M'' \eqdef \{ z \in Z \mid \{z\}
  \times T \subseteq M' \} \subsetint N'' \eqdef \{ z \in Z \mid \{z\}
  \times S \subseteq N' \}$. But one readily checks that $M'' = \{ z
  \in Z \mid \{ z \} \times T \times B \subseteq M \}$ and $N'' = \{ z
  \in Z \mid \{ z \} \times S \times A \subseteq N \}$.  Because the
  space $Z$ and the sets $M,N \subseteq Z \times X \times Y$ are
  arbitrary, the result follows from
  Theorem~\ref{relative:universal}$(\ref{thm:4} \Rightarrow
  \ref{thm:1})$.
\end{proof}

\section{Compactly generated spaces} \label{compactly}

In this section we assume familiarity with the notions and results
developed in~\cite{escardo:lawson:simpson} and with domain
theory~\cite{gierz:domains}.

Let $\mathcal{E}$ be the class of all spaces that are exponentiable in
the category of topological spaces, and $\mathcal{C} \subseteq
\mathcal{E}$ be any productive class of spaces. If $\mathcal{C}$
consists of the compact Hausdorff spaces, then the
$\mathcal{C}$-generated spaces (or $\mathcal{C}$-spaces for short) are
known as the \emph{compactly generated spaces}.

The categorical product in the category of $\mathcal{C}$-spaces is
given by the $\mathcal{C}$-coreflection of the topological product:
\linebreak[3] $X \times_{\mathcal{C}} Y = \mathcal{C}(X \times Y)$.
Recall that the $\mathcal{C}$-coreflection $\mathcal{C} X$ of a
topological space $X$ is obtained by keeping the same points and
suitably refining the given topology of~$X$.
By~\cite[Theorem~5.4]{escardo:lawson:simpson}, we know that $X
\times_\mathcal{C} Y = X \times_\mathcal{E} Y$ for all
$\mathcal{C}$-spaces~$X$ and~$Y$. That is, the $\mathcal{C}$-product
doesn't depend on~$\mathcal{C}$, even though the
$\mathcal{C}$-coreflection does. We were thus led to ask whether there
is an instrinsic characterization of the $\mathcal{C}$-product
\cite[Problem 9.3]{escardo:lawson:simpson}.  We now develop an answer
to this question, formulated as Theorem~\ref{prod:charac} below. We
know that the Sierpinski space is a $\mathcal{C}$-generated space if
and only if the generating class~$\mathcal{C}$ includes a space in
which not every open set is closed~\cite[Lemma
4.6(ii)]{escardo:lawson:simpson}. In particular, the Sierpinski space
is $\mathcal{E}$-generated.  \pagebreak[3]
\begin{lemma} \label{fromels}
  Assume that the Sierpinski space is $\mathcal{C}$-generated.  For a
  $\mathcal{C}$-generated space~$X$, let \[ \O_\mathcal{C} X \] be the
  lattice of open sets of $X$ endowed with the topology that makes the
  bijection  $U \mapsto \chi_U \colon \O_\mathcal{C} X
  \to \Sierp^X$ into a homeomorphism, where the exponential is
  calculated in the category $\mathrm{Top}_\mathcal{C}$ of
  $\mathcal{C}$-spaces.
  \begin{enumerate}
  \item \label{lem:1} The topology of $\O_\mathcal{C} X$ is finer than
    the Scott topology.
  \item \label{lem:2} The topology of $\O_\mathcal{C} X$ coincides
    with the Scott topology if $\mathcal{C}$ generates all compact
    Hausdorff spaces.
  \item \label{lem:3} A set $W \subseteq Y \times_\mathcal{C} X$ is
    open if and only if its transpose $w: Y \to \O_\mathcal{C} X$
    defined by $w(y) = \{ x \in X \mid (y,x) \in W \}$ is continuous.
  \item \label{lem:4} The set $\{ (U,x) \in \O_\mathcal{C} X
    \times_\mathcal{C} X \mid x \in U\}$ is open in the
    $\mathcal{C}$-product.
  \end{enumerate}
\end{lemma}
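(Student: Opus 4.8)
The plan is to establish the four parts in the order~(\ref{lem:3}), (\ref{lem:4}), (\ref{lem:1}), (\ref{lem:2}), since the topological statements about the Scott topology rest on the exponential adjunction recorded in~(\ref{lem:3}). For~(\ref{lem:3}) I would merely unwind the universal property of the exponential. A set $W \subseteq Y \times_\mathcal{C} X$ is open iff its characteristic map $\chi_W \colon Y \times_\mathcal{C} X \to \Sierp$ is continuous, and by the defining property of $\Sierp^X$ in $\mathrm{Top}_\mathcal{C}$ this holds iff the exponential transpose $Y \to \Sierp^X$ is continuous. Transporting along the homeomorphism $U \mapsto \chi_U$ identifies this transpose with the map $w$, and so $W$ is open iff $w$ is continuous.

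Part~(\ref{lem:4}) is then the instance of~(\ref{lem:3}) with $Y = \O_\mathcal{C} X$ applied to the membership set $W = \{(U,x) \mid x \in U\}$: its transpose sends $U$ to $\{x \mid x \in U\} = U$, hence is the identity of $\O_\mathcal{C} X$, which is continuous, so $W$ is open. (Equivalently, this set is $e^{-1}(\top)$ for the continuous evaluation $e \colon \Sierp^X \times_\mathcal{C} X \to \Sierp$.) For~(\ref{lem:1}) it suffices to show that every Scott-open $\mathcal{V} \subseteq \O X$ is open in $\O_\mathcal{C} X$. Since $\O_\mathcal{C} X$ is $\mathcal{C}$-generated, this amounts to checking that $p^{-1}(\mathcal{V})$ is open for every $C \in \mathcal{C}$ and every continuous probe $p \colon C \to \O_\mathcal{C} X$. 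Because $C$ is exponentiable and $X$ is $\mathcal{C}$-generated, the ordinary product $C \times X$ is already $\mathcal{C}$-generated, so $C \times_\mathcal{C} X = C \times X$ by~\cite{escardo:lawson:simpson}; hence by~(\ref{lem:3}) the probe $p$ is the transpose of an open set $W \subseteq C \times X$, with $p(c) = \{x \mid (c,x) \in W\}$. It then remains to invoke the standard fact that the Scott topology on $\O X$ is a splitting topology (see \cite{escardo:heckmann:functionspace} or \cite{gierz:domains}), which says precisely that this $p$ is continuous into the Scott topology, so that $p^{-1}(\mathcal{V})$ is open.

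For~(\ref{lem:2}), part~(\ref{lem:1}) already gives one inclusion, so I only need that every $\O_\mathcal{C} X$-open set $\mathcal{V}$ is Scott-open. Upward closure is automatic, as the specialization order of $\O_\mathcal{C} X = \Sierp^X$ is inclusion and open sets are upper in any space. For inaccessibility by directed joins, suppose $\mathcal{D} \subseteq \O X$ is directed with $\bigcup \mathcal{D} = U \in \mathcal{V}$; the idea is to probe $\O_\mathcal{C} X$ by a compact Hausdorff space $K$ carrying a distinguished point that is a limit of the net $(D)_{D \in \mathcal{D}}$, together with a continuous map $g \colon K \to \O_\mathcal{C} X$ sending that point to $U$ and each $D$ to itself. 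Continuity of $g$ is obtained from~(\ref{lem:3}) by verifying that the associated family of open sets is continuously indexed, so that the corresponding graph is open; then openness of $g^{-1}(\mathcal{V})$, which contains the distinguished point, forces some $D \in \mathcal{D}$ into $\mathcal{V}$. The hypothesis that $\mathcal{C}$ generates all compact Hausdorff spaces is exactly what places $K$ in $\mathrm{Top}_\mathcal{C}$ and makes this legitimate.

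The main obstacle is this last step. When $\mathcal{D}$ is a chain $D_0 \subseteq D_1 \subseteq \cdots$ one takes $K$ to be the convergent sequence $\N \cup \{\infty\}$ with $g(n) = D_n$ and $g(\infty) = U$, and the graph $\{(k,x) \mid x \in g(k)\}$ is readily seen to be open. For a general directed family, however, one must produce a genuine compact Hausdorff model in which $U$ appears as a limit of the directed net while $g$ stays continuous; simply adjoining a top point with the order (Scott) topology will not do, since its $\mathcal{C}$-coreflection could shrink the neighbourhoods of $U$ and destroy the limit. Constructing the right probe for arbitrary directed sets is the delicate point on which the proof of~(\ref{lem:2}) turns.
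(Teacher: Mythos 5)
Your treatments of parts (\ref{lem:3}) and (\ref{lem:4}) are exactly the paper's: (\ref{lem:3}) is just the definition of exponential transpose (openness of $W$ equals continuity of $\chi_W$, which equals continuity of the transpose, using continuity of evaluation for the converse direction), and (\ref{lem:4}) is the observation that the transpose of the membership set is the identity of $\O_\mathcal{C} X$. For part (\ref{lem:1}) you diverge from the paper, which simply cites \cite[Theorem 5.15]{escardo:lawson:simpson} (the section explicitly assumes familiarity with the results of that paper), whereas you reconstruct a proof: test openness of a Scott-open family by probes $p \colon C \to \O_\mathcal{C} X$ with $C \in \mathcal{C}$, identify $C \times_\mathcal{C} X = C \times X$ (valid because $C$ is exponentiable and $\mathcal{C}$ is productive, so the topological product is already $\mathcal{C}$-generated), and invoke the standard fact that the Scott topology on $\O X$ is splitting. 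That argument is sound, and it is a reasonable self-contained substitute for the citation; your upward-closure remark in (\ref{lem:2}) is also fine, though it silently uses that the specialization order of $\Sierp^X$ contains pointwise inclusion, which deserves a one-line argument (e.g.\ transpose the map $\Sierp \times X \to \Sierp$ sending $(b,x)$ to $\top$ iff $x \in U_f$ or both $b = \top$ and $x \in U_g$).

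Part (\ref{lem:2}), however, contains a genuine gap, which you yourself flag. The paper's proof is again a citation, to \cite[Corollary 5.16]{escardo:lawson:simpson}; you instead attempt a direct proof, and the whole difficulty is concentrated in precisely the step you leave open: given a directed family $\mathcal{D}$ of open sets whose union lies in an open family $\mathcal{V}$ of $\O_\mathcal{C} X$, produce a compact Hausdorff space $K$ and a continuous map $g \colon K \to \O_\mathcal{C} X$ exhibiting the union as a limit of the members of $\mathcal{D}$. Your convergent-sequence probe handles only countable chains, and the paper's own discussion immediately after the lemma shows that this cannot suffice in general: when $\mathcal{C}$ is the singleton of the generic convergent sequence, the topology of $\O_\mathcal{C} X$ is characterized by inaccessibility under \emph{countable} directed unions and is in general strictly finer than the Scott topology. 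So the passage from chains to arbitrary directed families is not a routine refinement but the actual mathematical content of the cited result, and your proposal does not supply it. Either carry out that construction (which requires compact Hausdorff probes essentially richer than convergent sequences) or do what the paper does and quote \cite[Corollary 5.16]{escardo:lawson:simpson}.
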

\begin{proof}
  (\ref{lem:1}): \cite[Theorem 5.15]{escardo:lawson:simpson}.
  (\ref{lem:2}): \cite[Corollary 5.16]{escardo:lawson:simpson}.
  (\ref{lem:3}): By definition of exponential transpose.
  (\ref{lem:4}): Its transpose is the identity of $\O_\mathcal{C} X$.
\end{proof}

If $\mathcal{C}$ doesn't generate all compact Hausdorff spaces, the
second item doesn't necessarily hold. For example, if $\mathcal{C}$ is
a singleton consisting of the one-point compactification of the
discrete natural numbers (known as the ``generic convergent
sequence''), then a space is $\mathcal{C}$-generated if and only if it
is sequential, and for a sequential space $X$ we have that
$\mathcal{U} \subseteq \O_\mathcal{C} X$ is open if and only if it is
upwards closed and inaccessible by unions of countable directed sets.
If $X$ is a Lindel\"of space, as is the case if $X$ is a QCB space,
this does coincide with the Scott topology, but, in general, this is
strictly finer than the Scott topology.  The following holds without
any assumption on $\mathcal{C}$ other than that it is contained in
$\mathcal{E}$ and that it is productive.
\begin{theorem} \label{prod:charac}
  If $X$ and $Y$ are $\mathcal{C}$-spaces, then the following are
  equivalent for any set $W \subseteq Y \times X$.
  \begin{enumerate}
  \item \label{prod:1} $W$ is open in $Y \times_{\mathcal{C}} X$.
  \item \label{prod:2}
    \begin{enumerate}
    \item For each $y \in Y$, the set $U_y \eqdef \{ x \in X \mid
      (y,x) \in W\}$ is open, and
    \item for each Scott open set $\mathcal{U} \subseteq \O X$, the
      set $V_\mathcal{U} \eqdef \{ y \in Y \mid U_y \in \mathcal{U} \}$ is
      open.
    \end{enumerate}
  \item \label{prod:3}
    \begin{enumerate}
    \item For each $x \in X$, the set $V_x \eqdef \{ y \in Y \mid
      (y,x) \in W\}$ is open, and
    \item for each Scott open set $\mathcal{V} \subseteq \O Y$, the
      set $U_\mathcal{V} \eqdef \{ x \in X \mid V_x \in \mathcal{V} \}$ is
      open.
    \end{enumerate}
  \end{enumerate}
\end{theorem}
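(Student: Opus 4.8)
The plan is to prove the equivalence of~(\ref{prod:1}) and~(\ref{prod:2}); since~(\ref{prod:3}) is literally~(\ref{prod:2}) with the roles of $X$ and $Y$ interchanged, and since $W$ is open in $Y\times_{\mathcal C}X$ iff the transposed set is open in $X\times_{\mathcal C}Y$, the equivalence of~(\ref{prod:1}) and~(\ref{prod:3}) will follow by the same argument applied to the swapped data. Throughout I write $g_a\colon P\to\O X$ for the map $t\mapsto U_{a(t)}=\{x\mid (a(t),x)\in W\}$ attached to a continuous map $a\colon P\to Y$, and I regard $\O X$ as carrying the Scott topology. The one external input I will lean on is the standard fact that for an \emph{exponentiable} space $P$ and an \emph{arbitrary} space $X$, a function $g\colon P\to\O X$ is continuous for the Scott topology if and only if its graph $\{(t,x)\mid x\in g(t)\}$ is open in the topological product $P\times X$ (see \cite{escardo:heckmann:functionspace,gierz:domains}); the ``only if'' direction is the substantive one and is where exponentiability of $P$ is used, while the ``if'' direction is an elementary consequence of Scott-open sets being inaccessible by directed unions.

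For~(\ref{prod:1})$\Rightarrow$(\ref{prod:2}) I will use that $Y\times_{\mathcal C}X=\mathcal C(Y\times X)$ is a $\mathcal C$-space, so a set is open in it precisely when its preimage under every probe $P\to Y\times X$ with $P\in\mathcal C$ is open. For~(\ref{prod:2})(a), the map $x\mapsto(y,x)\colon X\to Y\times X$ is continuous, and since $X$ is a $\mathcal C$-space the coreflection adjunction lifts it to a continuous map into $Y\times_{\mathcal C}X$; pulling $W$ back gives $U_y$, which is therefore open. For~(\ref{prod:2})(b) I fix a Scott-open $\mathcal U\subseteq\O X$ and, as $Y$ is $\mathcal C$-generated, test $V_{\mathcal U}$ on a probe $a\colon P\to Y$ with $P\in\mathcal C$. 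Because $\mathcal C$ is productive and $P$ is exponentiable, $P\times X$ is already $\mathcal C$-generated, so $P\times_{\mathcal C}X=P\times X$ and the map $a\times\id_X\colon P\times X\to Y\times_{\mathcal C}X$ is continuous; hence $(a\times\id_X)^{-1}(W)$, which is exactly the graph of $g_a$, is open in $P\times X$. By the easy direction of the key fact, $g_a$ is Scott-continuous, so $a^{-1}(V_{\mathcal U})=g_a^{-1}(\mathcal U)$ is open, and $V_{\mathcal U}$ is open by $\mathcal C$-generation of $Y$.

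For~(\ref{prod:2})$\Rightarrow$(\ref{prod:1}) I again test openness of $W$ in $Y\times_{\mathcal C}X$ on an arbitrary probe, i.e.\ on a continuous map $q=(a,b)\colon P\to Y\times X$ with $P\in\mathcal C$, and show that $q^{-1}(W)=\{t\in P\mid b(t)\in g_a(t)\}$ is open in $P$. Condition~(\ref{prod:2})(a) says each $g_a(t)=U_{a(t)}$ is open, and condition~(\ref{prod:2})(b) together with continuity of $a$ gives, for every Scott-open $\mathcal U$, that $g_a^{-1}(\mathcal U)=a^{-1}(V_{\mathcal U})$ is open; thus $g_a\colon P\to\O X$ is Scott-continuous. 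Now the hard direction of the key fact applies, since $P$ is exponentiable: the graph $\{(t,x)\mid x\in g_a(t)\}$ is open in $P\times X$. Finally $q^{-1}(W)$ is the preimage of this graph under the continuous section $t\mapsto(t,b(t))\colon P\to P\times X$, hence open, as required.

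The main obstacle is the key fact in its nontrivial direction --- that Scott-continuity of $g_a$ forces its graph to be open in $P\times X$. This cannot hold for an arbitrary domain (the exponential topology $\O_{\mathcal C}X$ of Lemma~\ref{fromels}, where it is defined, is in general strictly finer than the Scott topology, as the generic-convergent-sequence example shows), so the argument must genuinely exploit that every probe $P\in\mathcal C$ is exponentiable; this is exactly the point at which the restriction $\mathcal C\subseteq\mathcal E$ is indispensable. The only other thing to check carefully is the identification $P\times_{\mathcal C}X=P\times X$ for exponentiable $P$ and a $\mathcal C$-space $X$, which follows because $P\times(-)$ preserves colimits and, by productivity of $\mathcal C$, carries $\mathcal C$ into itself, so that $P\times X$ is a colimit of spaces in $\mathcal C$ and hence already $\mathcal C$-generated.
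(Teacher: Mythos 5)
Your overall architecture is sound: unfolding openness in $Y \times_{\mathcal{C}} X$ into openness of preimages under probes $(a,b)\colon P \to Y \times X$ with $P \in \mathcal{C}$ is a correct use of the coreflection, your direction~(\ref{prod:1})$\Rightarrow$(\ref{prod:2}) is complete (the splitting half of your ``key fact'' is indeed elementary), and the symmetry argument for~(\ref{prod:3}) matches the paper. The problem is the hard half of your key fact, which is precisely the crux of the whole theorem and is neither proved nor correctly cited. What the sources you invoke (Escard\'o--Heckmann, Gierz et al.) actually contain is the \emph{transposed} statement: for exponentiable $P$, open subsets of $P \times X$ correspond to continuous maps $X \to \O P$ with the Scott topology on $\O P$ --- equivalently, the membership set $\{(V,t) : t \in V\}$ is open in $\O P \times P$ because $P$ is core-compact. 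Your key fact instead concerns Scott-continuous maps $P \to \O X$ into the open-set lattice of the \emph{arbitrary} factor, and the usual argument (pull back the membership set of $\O X \times X$) is unavailable exactly because $X$ need not be core-compact. For arbitrary $X$ I know of no reference establishing your claim, and it is in any case not ``standard''; what is true is the version for $\mathcal{C}$-generated $X$ --- which, fortunately, is all you use --- and that version is essentially the content of Lemma~\ref{fromels}(\ref{lem:1})--(\ref{lem:2}) applied with $\mathcal{C}=\mathcal{E}$, i.e.\ of the results of \cite{escardo:lawson:simpson} that constitute the main imported ingredient of the paper's own proof. So, as submitted, the central step of (\ref{prod:2})$\Rightarrow$(\ref{prod:1}) rests on an unsupported citation.

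The gap is fillable, and in a way that keeps your probe-wise strategy and is genuinely different from the paper's route. Since $X$ is a $\mathcal{C}$-space, it is a quotient of a coproduct of probes $Q \in \mathcal{C}$; since $P$ is exponentiable, $P \times (-)$ preserves coproducts and quotient maps (Day--Kelly), so the graph $G = \{(t,x) : x \in g_a(t)\}$ is open in $P \times X$ if and only if $(\mathrm{id}_P \times b)^{-1}(G)$ is open in $P \times Q$ for every probe $b \colon Q \to X$. But $(\mathrm{id}_P \times b)^{-1}(G)$ is the graph of $b^{-1} \circ g_a \colon P \to \O Q$, which is Scott continuous because $b^{-1}$ preserves inclusions and directed unions; and now \emph{both} factors are exponentiable, so this graph is the preimage of the membership set of $\O Q \times Q$ (open, as $Q$ is core-compact) under the continuous map $(b^{-1}\circ g_a) \times \mathrm{id}_Q$, hence open. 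With this substituted for your ``key fact'', your proof is correct and needs only classical inputs (Day--Kelly and the conjoining property of the Scott topology over an exponentiable space), whereas the paper argues globally through the exponential $\O_{\mathcal{E}}X$, quoting \cite{escardo:lawson:simpson} for the facts that its topology is exactly the Scott topology and that the $\mathcal{C}$- and $\mathcal{E}$-products agree. (A small separate inaccuracy: your closing remark claims $P \times_{\mathcal{C}} X = P \times X$ for merely exponentiable $P$, but the productivity argument needs $P \in \mathcal{C}$; this is harmless since all your probes do lie in $\mathcal{C}$.)
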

\begin{proof}
  We prove $(\ref{prod:1}) \Leftrightarrow (\ref{prod:2})$. A proof of
  $(\ref{prod:1}) \Leftrightarrow (\ref{prod:3})$ is obtained via the
  canonical homeomorphism $X \times_\mathcal{C} Y \cong Y
  \times_\mathcal{C} X$.

  $(\ref{prod:1}) \Rightarrow (\ref{prod:2})$: As we have already
  discussed, if $W$ is open in the $\mathcal{C}$-product, then it is
  also open in the $\mathcal{E}$-product.  Because the Sierpinski
  space $\Sierp$ is an $\mathcal{E}$-space, its transpose $w \colon Y
  \to \O_\mathcal{E} X$ defined in the previous lemma is continuous.
  Then one readily checks that $w(y) = U_y$ and $w^{-1}(\mathcal{U}) =
  V_\mathcal{U}$, which shows that $U_y$ and $V_\mathcal{U}$ are open,
  as required.

  $(\ref{prod:2}) \Rightarrow (\ref{prod:1})$: By the hypothesis, the
  map $w \colon Y \to \O_\mathcal{E} X$ given by $w(y) = U_y$ is well
  defined and continuous.  But one readily checks that this is the
  transpose of $W$ defined in Lemma~\ref{fromels}, and hence $W$ is
  open in the $\mathcal{E}$-product, and therefore in the
  $\mathcal{C}$-product, as required.
\end{proof}

We now return to the subject of compactness. We henceforth assume that
the Sierpinski space is $\mathcal{C}$-generated.
\begin{theorem}
  The following are equivalent for any subset $Q$ of a
  $\mathcal{C}$-space $X$.
  \begin{enumerate}
  \item \label{ccomp:1} The set $\{ U \in \O_\mathcal{C} X \mid Q
    \subseteq U \}$ is open.
  \item \label{ccomp:2} For every $\mathcal{C}$-space $Y$, and every
    open set $W \subseteq Y \times_\mathcal{C} X$, the set \[\{ y \in Y
    \mid \{y\} \times Q \subseteq W\}\] is open.
  \item \label{ccomp:3} The universal-quantification functional $A_Q
    \colon \Sierp^X \to \Sierp$ defined by
   \[ A_Q(p) = \top \iff \forall x \in Q. p(x) = \top \]
   is continuous, where the exponential is calculated in
   $\mathrm{Top}_\mathcal{C}$.
  \end{enumerate}
\end{theorem}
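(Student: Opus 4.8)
The plan is to read off all three conditions from the machinery of Lemma~\ref{fromels}, so that each becomes a transparent restatement of~(\ref{ccomp:1}). Concretely, I would prove $(\ref{ccomp:1}) \Leftrightarrow (\ref{ccomp:3})$ by transporting the set $\{ U \in \O_\mathcal{C} X \mid Q \subseteq U\}$ across the homeomorphism $U \mapsto \chi_U \colon \O_\mathcal{C} X \to \Sierp^X$, and then close the cycle $(\ref{ccomp:1}) \Rightarrow (\ref{ccomp:2}) \Rightarrow (\ref{ccomp:1})$ using the continuous transpose of Lemma~\ref{fromels}(\ref{lem:3}) and the open ``membership'' set of Lemma~\ref{fromels}(\ref{lem:4}).

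For $(\ref{ccomp:1}) \Leftrightarrow (\ref{ccomp:3})$, note that $Q \subseteq U$ holds iff $\chi_U(x) = \top$ for every $x \in Q$, and that every $p \in \Sierp^X$ equals $\chi_U$ for $U = p^{-1}(\top)$. Hence the image of $\{U \mid Q \subseteq U\}$ under $U \mapsto \chi_U$ is exactly $\{ p \in \Sierp^X \mid \forall x \in Q.\, p(x) = \top\} = A_Q^{-1}(\top)$. Since $U \mapsto \chi_U$ is a homeomorphism, condition~(\ref{ccomp:1}) is equivalent to openness of $A_Q^{-1}(\top)$ in $\Sierp^X$, and, because a map into $\Sierp$ is continuous precisely when the preimage of $\top$ is open, this is exactly condition~(\ref{ccomp:3}).

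For $(\ref{ccomp:1}) \Rightarrow (\ref{ccomp:2})$, let $Y$ be a $\mathcal{C}$-space and $W \subseteq Y \times_\mathcal{C} X$ open. By Lemma~\ref{fromels}(\ref{lem:3}) the transpose $w \colon Y \to \O_\mathcal{C} X$ with $w(y) = \{ x \in X \mid (y,x) \in W\}$ is continuous. Since $\{y\} \times Q \subseteq W$ iff $Q \subseteq w(y)$, the set in~(\ref{ccomp:2}) is $w^{-1}\{U \mid Q \subseteq U\}$, which is open by continuity of $w$ and the hypothesis~(\ref{ccomp:1}). For the converse $(\ref{ccomp:2}) \Rightarrow (\ref{ccomp:1})$, I would specialize~(\ref{ccomp:2}) to $Y = \O_\mathcal{C} X$ and to the set $W = \{ (U,x) \in \O_\mathcal{C} X \times_\mathcal{C} X \mid x \in U\}$, which is open in the $\mathcal{C}$-product by Lemma~\ref{fromels}(\ref{lem:4}). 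As $\{U\} \times Q \subseteq W$ holds iff $Q \subseteq U$, the hypothesis gives at once that $\{U \mid Q \subseteq U\}$ is open.

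I do not expect a genuine obstacle here: the substantial content---the existence of the $\mathcal{C}$-exponential $\Sierp^X$, the identification of its points with open sets, and above all the openness of the membership set $\{(U,x) \mid x \in U\}$, which plays the role that the ad hoc space $Z$ built by hand plays in the proof of Lemma~\ref{projection:nontrivial}---is already delivered by Lemma~\ref{fromels}. The only place demanding attention is the bookkeeping that matches $\{U \mid Q \subseteq U\}$ with $A_Q^{-1}(\top)$ and with $w^{-1}\{U \mid Q \subseteq U\}$; once these identifications are in place, each implication reduces to a single appeal to continuity.
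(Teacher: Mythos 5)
Your proof is correct, and every step checks out; it differs from the paper's mainly in how the cycle is organized and in which lemma carries the load. The paper proves (\ref{ccomp:1})$\Rightarrow$(\ref{ccomp:2})$\Rightarrow$(\ref{ccomp:3})$\Rightarrow$(\ref{ccomp:1}): for the first implication it cites Theorem~\ref{prod:charac}(\ref{prod:2}) with $\mathcal{U}=\{U \in \O_\mathcal{C} X \mid Q \subseteq U\}$; for the second it specializes (\ref{ccomp:2}) to $Y=\Sierp^X$ and $W=e^{-1}(\top)$ with $e$ the evaluation map; for the third it observes that $\{U \mid Q \subseteq U\}$ is the preimage of $\{\top\}$ under $A_Q \comp (U \mapsto \chi_U)$. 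You instead prove (\ref{ccomp:1})$\Leftrightarrow$(\ref{ccomp:3}) in one step by transporting along the homeomorphism, and prove (\ref{ccomp:1})$\Leftrightarrow$(\ref{ccomp:2}) from Lemma~\ref{fromels}(\ref{lem:3}) and~(\ref{lem:4}) alone, never touching Theorem~\ref{prod:charac}; note that the two arguments are close relatives, since your use of the membership set $\{(U,x) \mid x \in U\}$ with $Y=\O_\mathcal{C} X$ is exactly the image, under the homeomorphism $U \mapsto \chi_U$, of the paper's use of $e^{-1}(\top)$ with $Y=\Sierp^X$. Your route also buys a genuine gain in precision: the family $\mathcal{U}=\{U \mid Q \subseteq U\}$ furnished by hypothesis~(\ref{ccomp:1}) is only known to be open in $\O_\mathcal{C} X$, whose topology may be strictly finer than the Scott topology (Lemma~\ref{fromels}(\ref{lem:1})), whereas Theorem~\ref{prod:charac}(\ref{prod:2}) is stated only for Scott open families; your direct appeal to continuity of the transpose $w \colon Y \to \O_\mathcal{C} X$ sidesteps this mismatch, which the paper's citation of Theorem~\ref{prod:charac} glosses over.
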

\begin{proof}
  $(\ref{ccomp:1}) \Rightarrow (\ref{ccomp:2})$: One readily checks
  that the set $\{ y \in Y \mid \{y\} \times Q \subseteq W\}$ is the
  same as $V_\mathcal{U}$ in Theorem~\ref{prod:charac}(\ref{prod:2})
  for the choice $\mathcal{U}=\{ U \in \O_\mathcal{C} X \mid Q
  \subseteq U\}$.

  $(\ref{ccomp:2}) \Rightarrow (\ref{ccomp:3})$ Because the evaluation
  map $e \colon \Sierp^X \times_\mathcal{C} X \to \Sierp$ is
  continuous, the set $W \eqdef e^{-1}(\top)$ is open, and hence the
  set $\{ p \in \Sierp^X \mid \{ p \} \times Q \subseteq W \} = \{ p
  \in \Sierp^X \mid \forall x \in Q.  p(x) = \top \} = A^{-1}(\top)$
  is open by the hypothesis, and therefore $A_Q$ is continuous.

  $(\ref{ccomp:3}) \Rightarrow (\ref{ccomp:1})$: The set $\{ U \in
  \O_\mathcal{C} X \mid Q \subseteq U \}$ is the inverse image of
  $\{\top\}$ for the composite $A_Q \comp (U \mapsto \chi_U) \colon
  \O_C X \to \Sierp^X \to \Sierp$.
\end{proof}

\begin{definition}
  When these equivalent conditions hold, we say that $Q$ is
  \emph{$\mathcal{C}$-compact}.
\end{definition}
For example, it follows from the above observations that if the class
$\mathcal{C}$ is a singleton consisting of the generic convergent
sequence, then a $\mathcal{C}$-generated space (i.e.\ a sequential
space) is $\mathcal{C}$-compact if and only if every countable open
cover has a finite subcover. However, for compactly generated spaces,
the same notion of compactness is obtained, as shown by the next
proposition. We first formulate an immediate consequence of the above
theorem.
\begin{corollary}
  A $\mathcal{C}$-space $X$ is $\mathcal{C}$-compact if and only if
  the projection $Y \times_\mathcal{C} X \to Y$ is closed for every
  $\mathcal{C}$-space $Y$.
\end{corollary}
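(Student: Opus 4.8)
The plan is to deduce this corollary from the immediately preceding theorem by recognizing that, in the context of $\mathcal{C}$-spaces, the closedness of the projection $Y \times_\mathcal{C} X \to Y$ is precisely condition~(\ref{ccomp:2}) of that theorem, phrased in terms of closed maps rather than open sets. This is the same move that takes us from Lemma~\ref{reformulation} and Lemma~\ref{compact:set} to the closed-projection characterization in the ordinary topological setting; here we simply carry the argument out inside $\mathrm{Top}_\mathcal{C}$, using the $\mathcal{C}$-product in place of the topological product.

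First I would take the projection $\pi \colon Y \times_\mathcal{C} X \to Y$ and reformulate its closedness using the open-set description from Lemma~\ref{reformulation} applied to $g = \pi$: the map $\pi$ is closed if and only if, for every open set $W \subseteq Y \times_\mathcal{C} X$, the set $\{ y \in Y \mid \pi^{-1}\{y\} \subseteq W \}$ is open. The key calculation, entirely analogous to the one preceding Lemma~\ref{reformulation}, is that $\pi^{-1}\{y\} = \{y\} \times X$ for the full space, but since we are testing $\mathcal{C}$-compactness of the \emph{subset} $Q$, the relevant relativized statement is that $Q$ is $\mathcal{C}$-compact iff $\{ y \in Y \mid \{y\} \times Q \subseteq W \}$ is open for every open $W$; this is exactly condition~(\ref{ccomp:2}). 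Thus closedness of the projection for the case $Q = X$ matches~(\ref{ccomp:2}) verbatim.

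Concretely, for the corollary as stated (about $\mathcal{C}$-compactness of the whole space $X$), I would specialize the preceding theorem to $Q = X$. In that case condition~(\ref{ccomp:2}) asserts that $\{ y \in Y \mid \{y\} \times X \subseteq W \}$ is open for every $\mathcal{C}$-space $Y$ and every open $W \subseteq Y \times_\mathcal{C} X$. Since $\{y\} \times X \subseteq W$ says exactly that $\pi^{-1}\{y\} = \{y\} \times X \subseteq W$, Lemma~\ref{reformulation} (applied to $\pi$ inside $\mathrm{Top}_\mathcal{C}$) translates this into closedness of $\pi$. Running the equivalence in both directions gives: $X$ is $\mathcal{C}$-compact $\iff$ condition~(\ref{ccomp:2}) holds for $Q = X$ $\iff$ the projection $Y \times_\mathcal{C} X \to Y$ is closed for every $\mathcal{C}$-space $Y$.

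The only point requiring care — and the place I would flag as the main obstacle — is verifying that Lemma~\ref{reformulation} applies unchanged when the ambient category is $\mathrm{Top}_\mathcal{C}$ and the product is the $\mathcal{C}$-product rather than the topological one. Lemma~\ref{reformulation} is a purely formal statement about a continuous map $g \colon A \to B$ and the open sets of its domain, so it transfers directly once we note that the projection $\pi$ is a genuine continuous map of $\mathcal{C}$-spaces and that $\pi^{-1}\{y\} = \{y\} \times X$ holds set-theoretically irrespective of which topology refines the product. Since openness and closedness are being measured intrinsically within $\mathrm{Top}_\mathcal{C}$, no comparison between the $\mathcal{C}$-topology and the topological-product topology is needed, and the translation goes through cleanly.
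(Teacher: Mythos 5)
Your proposal is correct and takes essentially the same route as the paper: the paper's one-line proof (``use the De Morgan laws as in Section 2'') is precisely the complementation argument you package via Lemma~\ref{reformulation}, applied to the projection $\pi$ with $\pi^{-1}\{y\} = \{y\} \times X$, matching condition~(2) of the preceding theorem in the case $Q = X$. Your added remark that this translation is purely formal and therefore valid verbatim for the finer $\mathcal{C}$-product topology is exactly the (implicit) justification the paper relies on.
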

\begin{proof}
  Use the De Morgan laws as in Section~\ref{new}.
\end{proof}

\begin{proposition}
  Any compact set is $\mathcal{C}$-compact. If the class $\mathcal{C}$
  generates all compact Hausdorff spaces, the converse holds.
\end{proposition}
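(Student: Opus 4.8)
The plan is to reduce both assertions to the characterization of $\mathcal{C}$-compactness furnished by condition~(\ref{ccomp:1}) of the preceding theorem, namely that $Q$ is $\mathcal{C}$-compact if and only if the collection $\mathcal{U}_Q \eqdef \{ U \in \O_\mathcal{C} X \mid Q \subseteq U \}$ is open in $\O_\mathcal{C} X$, and then to compare the topology of $\O_\mathcal{C} X$ with the Scott topology on the underlying lattice $\O X$ of open sets by invoking the two clauses of Lemma~\ref{fromels}. The bridge between the two topics is the standard observation that a subset $Q \subseteq X$ is compact if and only if $\mathcal{U}_Q$ is Scott-open in $\O X$: the set $\mathcal{U}_Q$ is trivially upwards closed, and the remaining requirement for Scott-openness, inaccessibility by directed unions, says precisely that whenever a directed family of open sets covers $Q$ then a single member already covers $Q$, which is exactly the directed-cover formulation of compactness recalled just before Lemma~\ref{projection:nontrivial}.

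For the first assertion I would argue as follows. Assume $Q$ is compact. Then by the observation above $\mathcal{U}_Q$ is Scott-open, and since by Lemma~\ref{fromels}(\ref{lem:1}) the topology of $\O_\mathcal{C} X$ is finer than the Scott topology, $\mathcal{U}_Q$ is in particular open in $\O_\mathcal{C} X$. This is exactly condition~(\ref{ccomp:1}), so $Q$ is $\mathcal{C}$-compact. For the converse, suppose $\mathcal{C}$ generates all compact Hausdorff spaces. Then by Lemma~\ref{fromels}(\ref{lem:2}) the topology of $\O_\mathcal{C} X$ coincides with the Scott topology, so openness of $\mathcal{U}_Q$ in $\O_\mathcal{C} X$, that is $\mathcal{C}$-compactness of $Q$, is the same as Scott-openness of $\mathcal{U}_Q$, which we have already seen amounts to compactness of $Q$.

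I expect no genuine obstacle here, since each direction collapses to one clause of Lemma~\ref{fromels}; the only point deserving care is the equivalence between compactness of $Q$ and Scott-openness of $\mathcal{U}_Q$, and specifically the matching of the inaccessibility-by-directed-unions clause with the directed-cover characterization of compactness. Everything else is bookkeeping about the common underlying lattice $\O X$ of the two topologies compared.
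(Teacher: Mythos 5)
Your proposal is correct and follows essentially the same route as the paper's own proof: both directions reduce to openness of $\{ U \in \O_\mathcal{C} X \mid Q \subseteq U \}$, with the forward implication using Lemma~\ref{fromels}(\ref{lem:1}) and the converse using Lemma~\ref{fromels}(\ref{lem:2}). The only difference is that you spell out the equivalence between compactness of $Q$ and Scott-openness of this collection via the directed-cover characterization, which the paper compresses into the phrase ``by definition of the Scott topology.''
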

\begin{proof}
  If $Q$ is compact subset of a $\mathcal{C}$-space $X$, then $\{ U
  \in \O X \mid Q \subseteq U \}$ is Scott open by definition of the
  Scott topology, and hence open in $\O_\mathcal{C} X$ by
  Lemma~\ref{fromels}.  Conversely, if $Q$ is $\mathcal{C}$-compact
  and the hypothesis holds, then $\{ U \in \O X \mid Q \subseteq U \}$
  is Scott open by Lemma~\ref{fromels}, and hence compact by
  definition of the Scott topology.
\end{proof}

Thus, even though $Y \times_\mathcal{C} X$ has a greater (and somewhat
mysterious) supply of open sets than $Y \times X$, it is still the
case that if $Q$ is compact then for every open set $W \subseteq Y
\times_\mathcal{C} X$, the set $\{ y \in Y \mid \{y\} \times Q
\subseteq W\}$ is open.

\begin{definition}
  We say that a $\mathcal{C}$-space $X$ is
  \emph{$\mathcal{C}$-Hausdorff} if its diagonal is closed in $X
  \times_\mathcal{C} X$, and that it is \emph{$\mathcal{C}$-discrete}
  if its diagonal is open in $X \times_\mathcal{C} X$.
\end{definition}
If a $\mathcal{C}$-space is Hausdorff (resp.\ discrete) then it is
$\mathcal{C}$-Hausdorff (resp.\ -discrete), because the
$\mathcal{C}$-product has a topology finer than the topological
product. There must be $\mathcal{C}$-Hausdorff spaces which are not
Hausdorff, but I doubt that this holds for discreteness.

We have developed enough ideas and techniques to routinely develop
proofs of the following, and hence we omit them:
\begin{proposition} Let $X$ and $Y$ be $\mathcal{C}$-spaces.
  \begin{enumerate}
  \item If $X$ and $Y$ are $\mathcal{C}$-compact, then so is $X
    \times_\mathcal{C} Y$.

    \emph{This potentially fails if one replaces
      $\mathcal{C}$-compactness by topological compactness, because
      the $\mathcal{C}$-product has a topology finner than the
      topological product.}

  \item If $f \colon X \to Y$ is continuous and $Q \subseteq X$ is
    $\mathcal{C}$-compact, then so is $f(Q)$.

  \item If $X$ is $\mathcal{C}$-Hausdorff and $Q \subseteq X$ is
    $\mathcal{C}$-compact, then $Q$ is closed.

    \emph{Notice that this is stronger than the statement that a
      compact subspace of a Hausdorff $\mathcal{C}$-space is closed,
      as it has weaker hypotheses.}

  \item If $F \subseteq X$ is closed and $X$ is $\mathcal{C}$-compact,
    then so is $F$.

  \item If $Y$ is $\mathcal{C}$-Hausdorff, then so is the exponential
    $Y^X$.

\item If $X$ is $\mathcal{C}$-compact and $Y$ is
  $\mathcal{C}$-discrete, then the exponential $Y^X$ is
  $\mathcal{C}$-discrete.

\item If $Q \subseteq X$ is $\mathcal{C}$-compact and $V \subseteq Y$
  is open, then $\{ f \in Y^X \mid f(Q) \subseteq V \}$ is open.
  \end{enumerate}
\end{proposition}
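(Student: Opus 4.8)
The plan is to transcribe, almost verbatim, the synthetic proofs of Section~\ref{sample} (and, for the function-space items, Section~\ref{original}), performing three systematic substitutions throughout: ordinary compactness becomes $\mathcal{C}$-compactness, the ordinary topological product becomes the $\mathcal{C}$-product $\times_\mathcal{C}$, and ``Hausdorff''/``discrete'' are read as $\mathcal{C}$-Hausdorff/$\mathcal{C}$-discrete, i.e.\ as closedness/openness of the diagonal in the $\mathcal{C}$-product. The single engine driving every argument is condition~(\ref{ccomp:2}) of the $\mathcal{C}$-compactness characterization, which plays exactly the role that Lemma~\ref{compact:set}(3) played before: for a $\mathcal{C}$-compact $Q \subseteq X$, every $\mathcal{C}$-space $Y$, and every open $W \subseteq Y \times_\mathcal{C} X$, the set $\{ y \in Y \mid \{y\} \times Q \subseteq W\}$ is open. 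Since this is the only feature of compactness that the proofs in Section~\ref{sample} actually consume, each of those proofs survives the substitution.

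Concretely, for the non-function-space items I would argue as follows. For (4), given a $\mathcal{C}$-space $Z$ and open $W \subseteq Z \times_\mathcal{C} X$, the set $\{ z \mid \forall x \in F.(z,x)\in W\}$ is rewritten, by the De Morgan manipulation of Section~\ref{sample}, as $\{ z \mid \forall x \in X.(z,x)\in W'\}$ for $W' = \bigl(Z \times_\mathcal{C} (X\setminus F)\bigr)\cup W$; here $W'$ is open because $X\setminus F$ is open and the projection $Z \times_\mathcal{C} X \to X$ is continuous, and the conclusion follows from $\mathcal{C}$-compactness of $X$ via~(\ref{ccomp:2}) with $Q=X$. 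For (2) one uses the identity $\{z \mid \forall y \in f(Q).(z,y)\in W\} = \{z \mid \forall x \in Q.(z,f(x))\in W\}$ together with continuity of $\id_Z \times_\mathcal{C} f$. For (3) the complement of the diagonal is open in $X \times_\mathcal{C} X$ precisely by the $\mathcal{C}$-Hausdorff hypothesis, and (\ref{ccomp:2}) with $Y=X$ shows that $X\setminus Q$ is open. For (1), I would iterate the quantifier reduction, first over $Y$ and then over $X$, to show that for a $\mathcal{C}$-space $Z$ and open $W \subseteq Z \times_\mathcal{C} X \times_\mathcal{C} Y$ the set $\{z \mid \forall (x,y).(z,x,y)\in W\}$ is open.

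The function-space items (5)--(7) would be handled exactly as in Sections~\ref{original} and~\ref{sample}, reading $Y^X$ as the exponential in $\mathrm{Top}_\mathcal{C}$ (Definition~\ref{expo:univ}, now interpreted in that category): the evaluation map $Y^X \times_\mathcal{C} X \to Y$ is continuous, which is all the arguments use. For (5) the codiagonal of $Y^X$ is the union over $x$ of the preimages of the (open) codiagonal of $Y$ under $(f,g)\mapsto(f(x),g(x))$; for (6) the diagonal of $Y^X$ is open by $\mathcal{C}$-compactness of $X$ applied to the open set $\{(f,g,x)\mid f(x)=g(x)\}$; and for (7) the set $\{f\in Y^X\mid f(Q)\subseteq V\}$ equals $\{f \mid \forall q\in Q.\,f(q)\in V\}$, which is open because $\{(f,x)\mid f(x)\in V\}$ is open and $Q$ is $\mathcal{C}$-compact.

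The main obstacle is not any single item but checking that the categorical infrastructure on which the transcription silently leans is genuinely available in $\mathrm{Top}_\mathcal{C}$. Specifically, the arguments use that $\times_\mathcal{C}$ is associative with continuous projections (needed to form and re-bracket the triple products in (1), (4) and (6), and to know that preimages of open sets along projections are open) and that exponentials exist with evaluation continuous relative to $\times_\mathcal{C}$ (needed for (5)--(7)). All of this holds because $\mathcal{C}\subseteq\mathcal{E}$ is productive and hence $\mathrm{Top}_\mathcal{C}$ is cartesian closed. A secondary point worth verifying is that (3) really does go through under the \emph{weaker} $\mathcal{C}$-Hausdorff hypothesis: the proof uses only openness of the complement of the diagonal in $X\times_\mathcal{C} X$, which is exactly what $\mathcal{C}$-Hausdorffness provides, so no appeal to topological Hausdorffness is made.
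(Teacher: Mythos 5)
Your proposal is correct and is precisely what the paper intends: the paper omits these proofs as routinely obtainable from the ideas already developed, and your transcription of the synthetic arguments of Section~\ref{sample} (and Section~\ref{original}), with condition~(\ref{ccomp:2}) of the $\mathcal{C}$-compactness theorem playing the role of Lemma~\ref{compact:set}, is exactly that routine development. The infrastructure you flag---continuity of projections and of product maps for $\times_\mathcal{C}$, associativity of the $\mathcal{C}$-product, and existence of exponentials with continuous evaluation---is indeed available because $\mathrm{Top}_\mathcal{C}$ is cartesian closed for productive $\mathcal{C} \subseteq \mathcal{E}$, as established in~\cite{escardo:lawson:simpson}, so no gap remains.
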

Nb.\ We can define the \emph{$\mathcal{C}$-Isbell topology} on the set
of continuous maps $X \to Y$ as the usual Isbell topology, replacing
Scott openness by openness in $\O_\mathcal{C} X$. It is easy to see
that the exponential topology is finer than the $\mathcal{C}$-Isbell
topology. 

\newcommand{\Scott}{\operatorname{\Sigma}}

\medskip

We now develop another application of Theorem~\ref{prod:charac}.
It is well-known that the (full and faithful) functor $\Scott \colon
\mathrm{DCPO} \to \mathrm{Top}$ from the category of dcpos to
topological spaces, that endows a dcpo with its Scott topology and
acts identically on maps, fails to preserve finite
products~\cite{gierz:domains}. By~\cite[Theorem
4.7]{escardo:lawson:simpson}, we know that dcpos under the Scott
topology are compactly generated. Thus, if every compactly generated
space is a $\mathcal{C}$-space then $\Scott$ factors through the
category $\mathrm{Top}_\mathcal{C}$ of $\mathcal{C}$-spaces.
This is the case, for instance, if $\mathcal{C}=\mathcal{E}$ or
$\mathcal{C}$ consists of all compact Hausdorff spaces or of all
locally compact spaces.
\begin{theorem}
  If $\mathcal{C}\subseteq\mathcal{E}$ generates all compact Hausdorff
  spaces, then the functor $\Scott \colon \mathrm{DCPO} \to
  \mathrm{Top}_\mathcal{C}$ preserves finite products.
\end{theorem}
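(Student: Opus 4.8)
The plan is to reduce the statement to the product characterization of Theorem~\ref{prod:charac} together with the cartesian closedness of $\mathrm{DCPO}$. Write $\Scott$ for the Scott-topology functor. First I would check that $\Scott$ really lands in $\mathrm{Top}_\mathcal{C}$: by \cite[Theorem~4.7]{escardo:lawson:simpson} every dcpo under its Scott topology is compactly generated, and since $\mathcal{C}$ generates all compact Hausdorff spaces, every compactly generated space is a $\mathcal{C}$-space; hence $\Scott D$ is a $\mathcal{C}$-space for every dcpo $D$. Because the terminal dcpo is a one-point dcpo and $\Scott$ sends it to the one-point space, which is terminal in $\mathrm{Top}_\mathcal{C}$, the empty product is preserved, and it remains to treat binary products.

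Fix dcpos $D$ and $E$ and consider a subset $W \subseteq D \times E$. The categorical product in $\mathrm{DCPO}$ is the coordinatewise-ordered product dcpo, so $\Scott(D \times E)$ carries the Scott topology of $D \times E$, while $\Scott D \times_\mathcal{C} \Scott E$ is the categorical product in $\mathrm{Top}_\mathcal{C}$ on the same underlying set $D \times E$. The heart of the argument is to show that these two topologies coincide. Applying Theorem~\ref{prod:charac} with $Y = \Scott D$ and $X = \Scott E$ (legitimate, since both are $\mathcal{C}$-spaces), and noting that $\O X$, the lattice of open sets of $\Scott E$, is exactly the lattice $\O E$ of Scott-open sets of $E$, the set $W$ is open in $\Scott D \times_\mathcal{C} \Scott E$ if and only if (a)~each section $W_d \eqdef \{ e \in E \mid (d,e) \in W \}$ is Scott open in $E$, and (b)~for every Scott-open $\mathcal{U} \subseteq \O E$ the set $\{ d \in D \mid W_d \in \mathcal{U} \}$ is Scott open in $D$; this is precisely condition~\ref{prod:charac}(\ref{prod:2}).

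The key observation is that (a) and (b) together say exactly that the transpose $\phi \colon D \to \O E$, $\phi(d) = W_d$, is a well-defined Scott-continuous map of dcpos: (a) is well-definedness (each value is a point of $\O E$), and (b) is continuity for the Scott topologies on $D$ and $\O E$, which for dcpos is equivalent to preservation of directed suprema. I would then invoke the cartesian closedness of $\mathrm{DCPO}$ \cite{gierz:domains}, with exponential $E \Rightarrow \Sierp \cong \O E$ (a Scott-continuous map into the two-element chain $\Sierp$ is the characteristic map of a Scott-open set, and the pointwise order corresponds to inclusion): the natural bijection $\mathrm{DCPO}(D \times E, \Sierp) \cong \mathrm{DCPO}(D, \O E)$ carries a Scott-open $W \subseteq D \times E$ to its transpose $\phi$. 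Hence $W$ is Scott open in $D \times E$ iff $\phi$ is Scott continuous iff (a) and (b) hold. Combining, $W$ is open in $\Scott D \times_\mathcal{C} \Scott E$ iff it is open in $\Scott(D \times E)$; since both are $\mathcal{C}$-spaces on the same underlying set, they are equal, and as the coordinate projections of the product dcpo are Scott continuous and coincide with the $\mathcal{C}$-product projections, $\Scott$ carries the product cone to a product cone, so binary products are preserved.

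The step I expect to require the most care is the identification of condition~\ref{prod:charac}(\ref{prod:2}) with Scott continuity of the transpose $\phi$, and in particular checking that topological continuity of $\phi$ for the Scott topologies coincides with Scott continuity (preservation of directed suprema), so that the cartesian-closed adjunction of $\mathrm{DCPO}$ applies cleanly. It is worth emphasizing that the hypothesis that $\mathcal{C}$ generates all compact Hausdorff spaces is used \emph{only} to guarantee that $\Scott$ factors through $\mathrm{Top}_\mathcal{C}$; the coincidence of the two topologies is then purely a matter of domain theory, driven by Theorem~\ref{prod:charac} and cartesian closedness, and needs no further assumption on $\mathcal{C}$.
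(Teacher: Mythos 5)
Your proof is correct, and its first half coincides with the paper's own (first) proof: both apply Theorem~\ref{prod:charac} with $Y = \Scott D$ and $X = \Scott E$ to reduce the statement to the purely domain-theoretic equivalence that $W \subseteq D \times E$ is Scott open in the product dcpo if and only if (a)~each section $W_d$ is Scott open in $E$ and (b)~$\{ d \in D \mid W_d \in \mathcal{U} \}$ is Scott open in $D$ for every Scott open $\mathcal{U} \subseteq \O E$. Where you part ways is what happens next: the paper declares this equivalence ``a somewhat long, but routine verification'' and omits it, whereas you actually prove it, and by a structural argument rather than the direct order-theoretic computation the paper alludes to. You identify (a) and (b) with well-definedness and Scott continuity of the transpose $d \mapsto W_d$ as a map $D \to \O E$ (using the standard fact that, between dcpos, continuity for the Scott topologies is the same as preservation of directed suprema), and then invoke the currying bijection $\mathrm{DCPO}(D \times E, \Sierp) \cong \mathrm{DCPO}(D, \O E)$ coming from the cartesian closedness of $\mathrm{DCPO}$ with $\O E$ as the exponential $\Sierp^E$. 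This is in effect a localized version of the paper's \emph{second} proof, which side-steps Theorem~\ref{prod:charac} entirely and instead chains hom-set isomorphisms from $\O(\Scott D \times_\mathcal{C} \Scott E)$ to $\O\Scott(D \times_{\mathrm{DCPO}} E)$, using \cite[Corollary~5.16]{escardo:lawson:simpson} and the full faithfulness of $\Scott$, and then checks that the composite isomorphism is the identity. Your hybrid buys something concrete: it keeps the first proof's clean reduction while replacing its omitted computation by two standard domain-theoretic facts, and it handles points the paper leaves implicit (the terminal object, and that equality of the two topologies together with equality of the projections yields preservation of the product cone). Your closing observation that the hypothesis on $\mathcal{C}$ is used \emph{only} to make $\Scott$ factor through $\mathrm{Top}_\mathcal{C}$ is also exactly right, and matches how the hypothesis enters the paper's first proof (namely, only so that $\Scott D$ and $\Scott E$ qualify as $\mathcal{C}$-spaces in Theorem~\ref{prod:charac}).
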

\begin{proof}
  Let $D$ and $E$ be dcpos. By Theorem~\ref{prod:charac}$(\ref{prod:1}
  \Leftrightarrow \ref{prod:2})$, it is enough to show that $W
  \subseteq D \times E$ is Scott open iff (a) for each $d \in D$ the
  set $V_d \eqdef \{ e \in D \mid (d,e) \in W \}$ is Scott open, and
  (b) for each Scott open set $\mathcal{V}$ of Scott open sets of $E$,
  the set $U_\mathcal{V} \eqdef \{ d \in D \mid V_d \in \mathcal{V}
  \}$ is Scott open.  We omit the somewhat long, but routine
  verification that this is the case.
\end{proof}

Here is a another argument that side-steps Theorem~\ref{prod:charac} but
uses the same ingredients as its proof:
\begin{proof}
  Let $D$ and~$E$ be two dcpos.  Write $\mathcal{A}(A,B)$ to denote
  the hom-set of a pair $A,B$ of objects of a category $A$, and
  $\mathcal{A}[A,B]$ to denote the exponential $B^A$ if it exists.
  Then, regarding $\Sierp$ both as a ($\mathcal{C}$-)space
  and a dcpo by an abuse of notation, we calculate, using obvious
  canonical isomorphisms:
  \begin{eqnarray*}
  \O (\Scott D \times_\mathcal{C} \Scott E) & \cong &
  \mathrm{Top}_\mathcal{C}(\Scott D \times_\mathcal{C} \Scott
  E,\Sierp) \\
  & \cong  &\mathrm{Top}_\mathcal{C}(\Scott D, \mathrm{Top}_\mathcal{C}[\Scott E,\Sierp]) \\
  & \cong  &\mathrm{Top}_\mathcal{C}(\Scott D, \Scott \mathrm{DCPO}[E,\Sierp]) \quad \text{by~\cite[Corollary
 5.16]{escardo:lawson:simpson}}\\
  & \cong &  \mathrm{DCPO}(D,\mathrm{DCPO}[E,\Sierp]) \\
  &  \cong & \mathrm{DCPO}(D \times_\mathrm{DCPO} E,\Sierp) \\
  & \cong & \O \Scott(D \times_\mathrm{DCPO} E).
 \end{eqnarray*}
 Moreover, the composition of all the canonical isomorphisms is easily
 seen to be the identity, because the transpositions are calculated as
 in the category of sets, and hence $\O (\Scott D \times_\mathcal{C}
 \Scott E) = \O \Scott(D \times_\mathrm{DCPO} E)$. Because both
 products are set-theoretical products with appropriate structure, we
 conclude that $\Scott D \times_\mathcal{C} \Scott E = \Scott(D
 \times_\mathrm{DCPO} E)$, as required.
\end{proof}

As a further corollary we obtain the known fact that the restriction
of the functor $\Scott \colon \mathrm{DCPO} \to \mathrm{Top}$ to
continuous dcpos preserves finite products. The reason is that
continuous dcpos are core-compact in the Scott topology, and hence are
in the class~$\mathcal{E}$, and that $X \times_{\mathcal{E}} Y = X
\times Y$ if one of the factors is in $\mathcal{E}$.  Moreover, this
argument establishes, more generally, the following fact, which is
also known~\cite{gierz:domains}:
\begin{corollary}
  The restriction of the functor $\Scott \colon \mathrm{DCPO} \to
  \mathrm{Top}$ to dcpos that are core-compact in their Scott
  topology preserves finite products.
\end{corollary}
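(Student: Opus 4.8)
The plan is to apply the preceding theorem with the specific choice $\mathcal{C} = \mathcal{E}$ and then use the core-compactness hypothesis to collapse the $\mathcal{E}$-product down to the ordinary topological product. First I would check that this instantiation is legitimate: the class $\mathcal{E}$ is productive and, being contained in itself, meets the hypothesis $\mathcal{C} \subseteq \mathcal{E}$ of that theorem; moreover it generates all compact Hausdorff spaces for the trivial reason that it already contains them, every compact Hausdorff space being locally compact, hence core-compact, hence exponentiable. Since dcpos under the Scott topology are compactly generated and every compactly generated space is $\mathcal{E}$-generated, the functor $\Scott$ factors through $\mathrm{Top}_\mathcal{E}$, so the preceding theorem applies and yields, for any dcpos $D$ and $E$, the identity $\Scott(D \times_{\mathrm{DCPO}} E) = \Scott D \times_{\mathcal{E}} \Scott E$.

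Next I would invoke the standing hypothesis. If $D$ and $E$ are core-compact in their Scott topology, then $\Scott D$ and $\Scott E$ are core-compact, hence exponentiable, hence members of $\mathcal{E}$. By the recalled fact that $X \times_{\mathcal{E}} Y = X \times Y$ whenever one of the factors lies in $\mathcal{E}$, the $\mathcal{E}$-coreflection has no effect here, so $\Scott D \times_{\mathcal{E}} \Scott E = \Scott D \times \Scott E$, the right-hand side being the genuine topological product. Chaining this with the identity from the previous paragraph gives $\Scott(D \times_{\mathrm{DCPO}} E) = \Scott D \times \Scott E$, which is precisely the assertion that the restricted functor preserves the binary product of $D$ and $E$; preservation of the empty product is immediate, since $\Scott$ sends the one-point dcpo to the one-point space.

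I do not expect a serious obstacle, as both substantive inputs are already in hand: the earlier theorem delivers product preservation into $\mathrm{Top}_\mathcal{E}$, and the classical identification of core-compactness with exponentiability, combined with the invariance of the $\mathcal{E}$-product under one exponentiable factor, does the rest. The one point deserving a moment's attention is \emph{why the restriction}, rather than the full functor $\Scott$, preserves products: it is exactly the core-compactness of $\Scott D$ and $\Scott E$ that forces $\Scott D \times_{\mathcal{E}} \Scott E$ to be the literal topological product instead of a strict refinement of it, and this is the hypothesis that the general functor lacks.
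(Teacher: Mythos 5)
Your proposal is correct and follows essentially the same route as the paper: instantiate the preceding theorem at $\mathcal{C}=\mathcal{E}$ (noting $\mathcal{E}$ generates all compact Hausdorff spaces), observe that core-compactness puts $\Scott D$ and $\Scott E$ in $\mathcal{E}$, and invoke the recalled identity $X \times_{\mathcal{E}} Y = X \times Y$ when a factor lies in $\mathcal{E}$ to collapse the $\mathcal{E}$-product to the topological one. Your explicit verification of the theorem's hypotheses and the remark on the empty product merely spell out details the paper leaves implicit.
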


\clearpage

\bibliographystyle{plain}
{\bibliography{references}}

\medskip

\paragraph{Links to the original versions of this paper:}
 \href{https://www.cs.bham.ac.uk/~mhe/papers/compactintersection.pdf}{2000},
 \href{https://www.cs.bham.ac.uk/~mhe/papers/compactness.pdf}{2005},
 \href{https://www.cs.bham.ac.uk/~mhe/papers/compactness-submitted.pdf}{2009 (this version)}.

\end{document}